\definecolor{red}{rgb}{1,0,0}
\definecolor{blue}{rgb}{.2,.2,.8}
\def\S{\mathcal S}
\def\B{\mathcal B}
\def\A{\mathcal A}
\def\P{\mathcal P} 
\def\Q{\mathcal Q}
\def\M{\mathcal M}
\def\p{\mathcal{POD}}
\newtheorem{theorem}{Theorem}[section]
\newtheorem{corollary}[theorem]{Corollary}
\newtheorem{conjecture}{Conjecture}
\newtheorem{lemma}[theorem]{Lemma}
\theoremstyle{definition}
\newtheorem{definition}{Definition}
\newtheorem*{remark}{Remark}
\begin{document}

	\title{$4$-Regular partitions and the pod function}
	\author{Cristina Ballantine
	\\
	\footnotesize Department of Mathematics and Computer Science\\
	\footnotesize College of The Holy Cross\\
	\footnotesize Worcester, MA 01610, USA \\
	\footnotesize cballant@holycross.edu
	\and Mircea Merca
	\\ 
	\footnotesize Department of Mathematics\\
	\footnotesize University of Craiova\\
	\footnotesize 200585 Craiova, Romania\\
	\footnotesize mircea.merca@profinfo.edu.ro
}
	\date{}
	\maketitle


\begin{abstract}
The partition function $pod(n)$ enumerates the partitions of $n$ wherein odd
parts are distinct and even parts are unrestricted. Recently, a number of properties for $pod(n)$ have been established. In this paper, for $k\in\{0,2\}$ we consider the partitions of $n$ into distinct parts not congruent to $k$ modulo $4$ and the $4$-regular partitions of $n$ in order to obtain new properties for $pod(n)$. In this context, we derive two new infinite families of linear inequalities involving the function $pod(n)$ and obtain new identities of Watson type.
\\
\\
{\bf Keywords:} partitions, theta series, theta products
\\
\\
{\bf MSC 2010:}  11P81, 11P82, 05A19, 05A20 
\end{abstract}

\section{Introduction}

A partition $\lambda$ of $n$ is a nonincreasing sequence $(\lambda_1, \lambda_2, \ldots, \lambda_\ell)$ of positive integers (called parts) such that $\sum_{i=1}^\ell\lambda_i=n$.  We refer to $\lambda_i$ as the $i$th part of $\lambda$, and as usual denote by $p(n)$  the number of integer partitions of $n$.  We note that $p(x)=0$ if $x \not \in \mathbb Z_{\geqslant 0}$, and since the empty partition $\emptyset$ is the only partition of $0$, we have that $p(0)=1$. The generating function for $p(n)$ satisfies the identity
\begin{align}
\sum_{n=0}^\infty p(n)\, q^n = \frac{1}{(q;q)_\infty}, \label{gfp}
\end{align}
where we use the  customary $q$-series notation
\begin{align*}
& (a;q)_n = \begin{cases}
1, & \text{for $n=0$,}\\
(1-a)(1-aq)\cdots(1-aq^{n-1}), &\text{for $n>0$;}
\end{cases}\\
& (a;q)_\infty = \lim_{n\to\infty} (a;q)_n.
\end{align*}
Moreover, we use the short notation
$$
(a_1,a_2,\ldots,a_n;q)_\infty = (a_1;q)_\infty (a_2;q)_\infty \cdots (a_n;q)_\infty.
$$
Because the infinite product $(a;q)_{\infty}$ diverges when $a\neq 0$ and $|q| \geqslant 1$, whenever
$(a;q)_{\infty}$ appears in a formula, we shall assume $|q| < 1$.

We denote by $pod(n)$ the function which enumerates the partitions of $n$ with odd
parts  distinct and even parts  unrestricted. Elementary techniques in the theory of partitions  give the following equivalent expressions for the  generating function for $pod(n)$:
\begin{equation}\label{podgen}
\sum_{n=0}^\infty pod(n)\, q^n = \frac{(-q;q^2)}{(q^2;q^2)_\infty}= \frac{(q^2;q^4)}{(q;q)_\infty} .
\end{equation}
The  $pod(n)$ function has been studied widely. It appears, for example, in the works of 
K. Alladi \cite{Alladi, Alladi16},
G. E. Andrews \cite{Andrews67,Andrews72}, 
G. E. Andrews and M. Merca \cite{Andrews18},
C. Ballantine, M. Merca, D. Passary, A. J. Yee \cite{BMPY},
A. Berkovich and F. Garvan \cite{Berkovich},
S.-P. Cui, W. X. Gu,  Z. S. Ma \cite{Cui15},
H. Fang, F. Xue, O. X. M. Yao \cite{Fang}, 
M. D. Hirschhorn and J. A. Sellers \cite{Hirschhorn10},
S. Radu and J. A. Sellers \cite{Radu}.
In this article, we consider the   interpretation of the $pod(n)$ function given by the last expression in \eqref{podgen} as the number of partitions of $n$ into parts not congruent to $2$ modulo $4$.

\begin{definition}
	Let $n$ be a nonnegative integer. We define 	$pod_{e}(n)$ (respectively $pod_o(n)$) to be the number of partitions of $n$ into an even (respectively odd) number of parts which are not congruent to $2$ modulo $4$.

\end{definition}

For example, since  the partitions of $8$ into parts not congruent to $2$ modulo $4$ are 
\begin{align*}
& (8),\ (7,1),\ (5,3),\ (5,1,1,1),\ (4,4),\ (4,3,1),\ (4,1,1,1,1),\\
& (3,3,1,1),\ (3,1,1,1,1,1),\ (1,1,1,1,1,1,1,1),
\end{align*}
we have that $pod_e(8)=7$ and $pod_o(8)=3$.

For an integer $\ell>1$,  a partition is called $\ell$-regular if none of its parts is divisible by $\ell$. In classical representation theory, $\ell$-regular partitions of $n$ parameterize the irreducible $\ell$-modular representations of the symmetric group $\S_n$ when $\ell$ is prime \cite{James}. The arithmetic properties of the  number $b_\ell(n)$ of  $\ell$-regular partitions of $n$
have been  investigated extensively (see, for example, \cite{Carlson,Cui,Dand,Furcy,Hirschhorn,Lovejoy,Penn, Penn8,Xia,Xia14,Webb}).   
The generating function for $b_{\ell}(n)$ satisfies the identity
$$
\sum_{n=0}^\infty b_\ell(n)\, q^n= \frac{(q^\ell;q^\ell)_\infty}{(q;q)_\infty}.
$$

In this article, we consider the $4$-regular partitions of $n$ and provide connections to the partitions of $n$ into distinct parts not congruent to $2$ modulo $4$. 

\begin{definition}
	Let $n$ be a nonnegative integer. We define
 $b_{4,e}(n)$ (respectively   $b_{4,o}(n)$) to be the number of $4$-regular partitions of $n$ into an even (respectively odd) number of parts.
	\end{definition} 


For example,  the partitions of $7$ into parts that are not multiples of $4$ are 
\begin{align*}
& (7),\ (6,1),\ (5,2),\ (5,1,1),\ (3,3,1),\ (3,2,2),\ (3,2,1,1),\ (3,1,1,1,1),\\
& (2,2,2,1),\ (2,2,1,1,1),\ (2,1,1,1,1,1),\ (1,1,1,1,1,1,1),
\end{align*}
and so  $b_4(7)=12$, $b_{4,e}(7)=5$ and $b_{4,o}(7)=7$.
The sequences $b_{4,e}(n)$ and $b_{4,o}(n)$  can be found in the
On-Line Encyclopedia of Integer Sequences \cite[A339406, A339407]{Sloane}.

\begin{definition}
	Let $n$ be a nonnegative integer and $k\in\{0,2\}$. We define $Q_k(n)$ to be the number of partitions of $n$ into distinct parts which are not congruent to $k$ modulo $4$.
\end{definition}

For example, $Q_0(14)=11$ since the $4$-regular partitions of $14$ into distinct parts are 
\begin{align*}
& (14),\ (13,1),\ (11,3),\ (11,2,1),\ (10,3,1),\ (9,5),\\ 
& (9,3,2),\ (7,6,1),\ (7,5,2),\ (6,5,3),\ (6,5,2,1),  
\end{align*}
while $Q_2(14) = 6$, the relevant partitions being 
\begin{align*}
(13,1),\ (11,3),\ (9,5),\ (9,4,1),\ (8,5,1),\ (7,4,3).
\end{align*}
The  generating functions for these sequences satisfy the identities
\begin{align}
\sum_{n=0}^\infty Q_0(n)\, q^n = (-q,-q^2,-q^3;q^4)_\infty \label{gfQ0}
\end{align}
and
\begin{align}
\sum_{n=0}^\infty Q_2(n)\, q^n = (-q,-q^3,-q^4;q^4)_\infty. \label{gfQ1}
\end{align}
We remark that the sequence $Q_0(n)$  can be found in the
On-Line Encyclopedia of Integer Sequences \cite[A070048]{Sloane}.  
Also on the page for $A070048$, we find another combinatorial interpretation for $Q_0(n)$: 
the number of partitions of $n$ into odd parts in which no part appears more than thrice.

  The following result introduces new combinatorial interpretations for the partition functions $Q_0(n)$ and $Q_2(n)$.

 \begin{theorem}\label{T:1}
 	For $n\geqslant 0$ the following hold.
 	\begin{enumerate}
 		\item[(i)]  $(-1)^n\, Q_0(n) =  pod_{e}(n)-pod_{o}(n) $
 		\item[(ii)] $(-1)^n\, Q_2(n) =  b_{4,e}(n)-b_{4,o}(n) $
 	\end{enumerate}
 \end{theorem}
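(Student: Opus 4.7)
My plan is to establish both identities by translating each side into an infinite $q$-product and verifying the resulting product identity. For the right side of (i), assigning the weight $(-1)^{\ell(\lambda)}$ to each partition $\lambda$ counted by $pod(n)$ yields the contribution $\sum_{m\geq0}(-q^k)^m=1/(1+q^k)$ from each allowed part size $k\not\equiv 2\pmod 4$, so
\[
\sum_{n\geq0}\bigl(pod_e(n)-pod_o(n)\bigr)q^n = \frac{1}{(-q,-q^3,-q^4;q^4)_\infty}.
\]
Substituting $q\mapsto -q$ in \eqref{gfQ0} gives the corresponding left side $\sum_{n\geq0}(-1)^nQ_0(n)\,q^n=(q,-q^2,q^3;q^4)_\infty$, so (i) is equivalent to
\[
(q,-q^2,q^3;q^4)_\infty\cdot(-q,-q^3,-q^4;q^4)_\infty = 1.
\]
An identical argument using \eqref{gfQ1} reduces (ii) to the closely related identity
\[
(q,q^3,-q^4;q^4)_\infty\cdot(-q,-q^2,-q^3;q^4)_\infty = 1.
\]

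For the simplification I plan to use three ingredients: (a) $(a;q)_\infty(-a;q)_\infty=(a^2;q^2)_\infty$ with base $q^4$ applied to the pairs $\{q,-q\}$ and $\{q^3,-q^3\}$, giving $(q^2;q^8)_\infty(q^6;q^8)_\infty=(q^2;q^4)_\infty$; (b) combining the remaining even-indexed factors as $(-q^2;q^4)_\infty(-q^4;q^4)_\infty=(-q^2;q^2)_\infty$ (the order being irrelevant, so the same residue pairing handles both (i) and (ii)); and (c) Euler's identity $(-q^2;q^2)_\infty=(q^4;q^4)_\infty/(q^2;q^2)_\infty$ together with the split $(q^2;q^2)_\infty=(q^2;q^4)_\infty(q^4;q^4)_\infty$ according to residues modulo $4$. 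Assembling these, the product collapses to $(q^2;q^4)_\infty(q^4;q^4)_\infty/(q^2;q^2)_\infty=1$ in both cases.

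I do not foresee a real obstacle: the argument is pure $q$-product bookkeeping, and the only care needed is to track residues modulo $4$ (and modulo $8$ after applying (a)). A bijective/involutive proof pairing $Q_0$-partitions with signed $pod$-partitions (resp.\ $Q_2$ with signed $b_4$) is conceivable, but the generating function route is cleaner and treats both parts of the theorem uniformly.
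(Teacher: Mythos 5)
Your proposal is correct and follows essentially the same route as the paper's analytic proof: both sides are turned into infinite $q$-products (the $(-1)^{\ell(\lambda)}$ weighting giving $1/(-q,-q^3,-q^4;q^4)_\infty$ and $1/(-q,-q^2,-q^3;q^4)_\infty$ respectively), and the resulting identity is settled by Euler's identity together with residue bookkeeping modulo $4$ and $8$; the paper merely phrases this as a direct chain of equalities ending in $(q;q^2)_\infty(-q^2;q^4)_\infty$ rather than as a ``product equals $1$'' verification. The paper also gives a second, purely combinatorial proof via explicit involutions, which you noted as conceivable but did not pursue.
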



 \begin{corollary}\label{CT:1}
	Let $n\geqslant 0$.
	\begin{enumerate}
		\item[(i)]  $Q_0(n)$ and $pod(n)$ have the same parity.
		\item[(ii)] $Q_2(n)$ and $b_{4}(n)$ have the same parity.
	\end{enumerate}
\end{corollary}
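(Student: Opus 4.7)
The plan is to deduce Corollary~\ref{CT:1} as an immediate mod~$2$ consequence of Theorem~\ref{T:1}. The only ingredients needed beyond Theorem~\ref{T:1} are the obvious decompositions
\[
pod(n)=pod_e(n)+pod_o(n) \qquad \text{and} \qquad b_4(n)=b_{4,e}(n)+b_{4,o}(n),
\]
which hold because every partition counted by $pod(n)$ (resp.\ $b_4(n)$) has either an even or odd number of parts, and the classes are disjoint.

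For part~(i), I would combine the above decomposition with Theorem~\ref{T:1}(i) via the elementary observation that for any integers $a,b$ one has $a+b \equiv a-b \pmod{2}$, since the two expressions differ by $2b$. Applying this with $a=pod_e(n)$ and $b=pod_o(n)$ gives
\[
pod(n)=pod_e(n)+pod_o(n)\equiv pod_e(n)-pod_o(n)=(-1)^n\,Q_0(n)\pmod{2}.
\]
Since $(-1)^n\equiv 1\pmod{2}$, this yields $pod(n)\equiv Q_0(n)\pmod{2}$, as claimed. Part~(ii) is proved identically, replacing Theorem~\ref{T:1}(i) with Theorem~\ref{T:1}(ii) and the $pod$ decomposition with the $b_4$ decomposition.

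There is no real obstacle here: once Theorem~\ref{T:1} is in hand, the corollary is a two-line consequence. The only point worth being explicit about is that the factor $(-1)^n$ on the right-hand side of Theorem~\ref{T:1} does not interfere with the parity conclusion, because $\pm 1$ are congruent modulo~$2$. Accordingly, my written proof would be short: state the two decompositions, cite Theorem~\ref{T:1}, and conclude with the mod~$2$ identification.
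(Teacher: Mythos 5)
Your proof is correct and matches the paper's (implicit) intent: the corollary is stated there without proof as an immediate consequence of Theorem~\ref{T:1}, relying on exactly the decompositions $pod(n)=pod_e(n)+pod_o(n)$, $b_4(n)=b_{4,e}(n)+b_{4,o}(n)$ and the observations that $a+b\equiv a-b\pmod 2$ and $(-1)^n\equiv 1\pmod 2$. Nothing to add.
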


We also have the following result relating  $Q_0(n)$ and $Q_2(n)$ to $pod(n)$.
  
\begin{theorem}\label{T:2}
	For $n\geqslant 0$ the following hold.
	\begin{enumerate}
		\item[(i)] $\displaystyle{Q_0(n) = pod(n)+2 \sum_{k=1}^{\infty} (-1)^{k}\, pod\big(n-4k^2\big)}$
		\item[(ii)] $\displaystyle{Q_2(n) = \sum_{k=0}^{\infty} (-1)^{k(k+1)/2}\, pod\big(n-k(k+1)\big)}$
	\end{enumerate}
\end{theorem}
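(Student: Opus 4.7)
My plan is to prove both parts at the level of generating functions. By \eqref{podgen}, \eqref{gfQ0}, and \eqref{gfQ1}, each claim is equivalent to an identity asserting that a specific infinite product equals the generating function of $pod(n)$ multiplied by a signed theta-type series. In each case, I would convert that theta series into an infinite product via Jacobi's triple product and then verify the resulting Pochhammer identity by elementary bookkeeping with the formulas $(-a;q)_\infty = (a^2;q^2)_\infty/(a;q)_\infty$, $(q;q)_\infty = (q,q^2,q^3,q^4;q^4)_\infty$, and $(q^4;q^4)_\infty = (q^4;q^8)_\infty (q^8;q^8)_\infty$.

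For part (i), I would first use the $k \mapsto -k$ symmetry to rewrite $1 + 2\sum_{k \geqslant 1}(-1)^k q^{4k^2}$ as the bilateral sum $\sum_{k \in \mathbb{Z}}(-1)^k q^{4k^2}$, and apply Jacobi's triple product with $z = -1$ and base $q^4$ to identify it with $(q^4;q^8)_\infty^2(q^8;q^8)_\infty$. The identity then reduces to
\begin{equation*}
(-q,-q^2,-q^3;q^4)_\infty \, (q;q)_\infty = (q^2;q^4)_\infty (q^4;q^8)_\infty^2 (q^8;q^8)_\infty,
\end{equation*}
which is a routine check using the three Pochhammer facts above.

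For part (ii), the central task is to recognize $T(q) := \sum_{k=0}^\infty (-1)^{k(k+1)/2} q^{k(k+1)}$ as an infinite product. Since the sign $(-1)^{k(k+1)/2}$ has period four in $k$, Jacobi's triple product does not apply directly. Instead, I would split $T(q)$ according to the parity of $k$: the even-index terms give $\sum_{m\geqslant 0}(-1)^m q^{4m^2+2m}$, while the odd-index terms give $\sum_{m\geqslant 0}(-1)^{m+1}q^{4m^2+6m+2}$. Under $m \mapsto -m-1$, the odd-index sum becomes exactly the $m \leqslant -1$ tail of $\sum_{m \in \mathbb{Z}}(-1)^m q^{4m^2+2m}$, so $T(q)$ collapses to this bilateral sum. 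Jacobi's triple product with $z = -q^2$ and base $q^4$ then yields $T(q) = (q^2;q^4)_\infty (q^8;q^8)_\infty$, and a final Pochhammer check matches this against $(-q,-q^3,-q^4;q^4)_\infty$.

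The main obstacle is the reindexing step in part (ii): the period-four sign pattern on a one-sided sum must be rearranged into a period-two sign pattern on a bilateral sum before any triple product identity can be applied. Once this substitution is spotted, both parts become symmetric applications of Jacobi's triple product and there is no further analytic difficulty.
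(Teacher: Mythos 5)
Your proposal is correct and follows essentially the same route as the paper: both parts are reduced to product identities via the Jacobi triple product (the paper invokes Gauss's identity \eqref{eq:Gauss22} for part (i), itself a triple-product specialization, where you apply \eqref{eq:JTP} directly) followed by routine Pochhammer bookkeeping. The bilateral reindexing you single out as the key step in part (ii) is precisely the step the paper uses, stated there as $\sum_{n=-\infty}^\infty(-1)^n\,q^{2n(2n-1)}=\sum_{n=0}^\infty(-1)^{n(n+1)/2}\,q^{n(n+1)}$.
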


In addition,  $Q_0(n)$ and $Q_2(n)$ satisfy similar linear recurrence relations involving the triangular numbers. To make these easier to state, we first define $$
\xi_n = 
\begin{cases}
(-1)^{m}\cdot 2 & \text{if $n=4m^2$ for some $m>0$,}\\
1 & \text{if $n=0$,}\\
0 & \text{otherwise,}
\end{cases}
$$
and
$$
\chi_n =
\begin{cases}
(-1)^{n/2} & \text{if $n=m(m+1)$ for some $m\geqslant 0$,}\\
0 & \text{otherwise.}
\end{cases}
$$
\begin{theorem}\label{T:3}
	For $n\geqslant 0$ the following hold.
	\begin{enumerate}
		\item[(i)] $\displaystyle{\sum_{k=0}^\infty (-1)^{k(k+1)/2}\, Q_0\big(n-k(k+1)/2\big)
			= \xi_n}$
		\item[(ii)] $\displaystyle{\sum_{k=0}^\infty (-1)^{k(k+1)/2}\, Q_2\big(n-k(k+1)/2\big)
			=\chi_n}$
	\end{enumerate}
\end{theorem}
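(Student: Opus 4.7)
The plan is to translate each identity into an identity of generating functions. Setting
$$
\Theta(q) := \sum_{k\geqslant 0} (-1)^{k(k+1)/2}\, q^{k(k+1)/2},
$$
the left side of (i) (resp.\ (ii)) is the coefficient of $q^n$ in $\Theta(q)\cdot\sum_{n\geqslant 0} Q_0(n)\, q^n$ (resp.\ $\Theta(q)\cdot\sum_{n\geqslant 0} Q_2(n)\, q^n$). The whole argument rests on expressing $\Theta(q)$ as an infinite product; after that, standard product manipulations conclude.

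First I would split $\Theta(q)$ according to the parity of $k$. The $k=2m$ part contributes $\sum_{m\geqslant 0}(-1)^m q^{m(2m+1)}$, while the $k=2m+1$ part, after re-indexing by $n=-m-1$, becomes $\sum_{n\leqslant -1}(-1)^n q^{n(2n+1)}$. Combining, $\Theta(q)=\sum_{n\in\mathbb Z}(-1)^n q^{n(2n+1)}$. Applying Jacobi's triple product with base $q^2$ and $z=-q$ yields
$$
\Theta(q) = (q;q^4)_\infty\, (q^3;q^4)_\infty\, (q^4;q^4)_\infty.
$$

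For (ii), I multiply this by the generating function \eqref{gfQ1} and pair each $(a;q^4)_\infty$ with its sign-partner via $(a;q)_\infty(-a;q)_\infty=(a^2;q^2)_\infty$. All six factors pair cleanly, giving
$$
\Theta(q)\cdot (-q,-q^3,-q^4;q^4)_\infty = (q^2;q^8)_\infty (q^6;q^8)_\infty (q^8;q^8)_\infty = \Theta(q^2).
$$
The coefficient of $q^n$ in $\Theta(q^2)=\sum_{k\geqslant 0}(-1)^{k(k+1)/2} q^{k(k+1)}$ is precisely $\chi_n$.

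For (i), the product $\Theta(q)\cdot(-q,-q^2,-q^3;q^4)_\infty$ has an \emph{unpaired} factor $(-q^2;q^4)_\infty$. I would handle it by writing $(-q^2;q^4)_\infty = (q^4;q^8)_\infty/(q^2;q^4)_\infty$ and $(q^2;q^4)_\infty = (q^2;q^8)_\infty(q^6;q^8)_\infty$; the pieces arising from the other two pairings cancel, leaving $(q^4;q^8)_\infty^2\,(q^8;q^8)_\infty$. A second application of Jacobi's triple product (base $q^4$, $z=-1$) identifies this as $\sum_{m\in\mathbb Z}(-1)^m q^{4m^2}$, whose $n$th coefficient is exactly $\xi_n$.

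The main obstacle is the product identity for $\Theta(q)$; once obtained, the only real subtlety is the detour through $(q^4;q^8)_\infty/(q^2;q^4)_\infty$ in (i) to dispose of the unpaired factor. The two parts then fall out of the same general scheme, differing only in whether the simplification terminates at a two-variable theta product ($\Theta(q^2)$) or at a classical theta sum ($\sum (-1)^m q^{4m^2}$).
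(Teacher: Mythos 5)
Your proof is correct and follows essentially the same route as the paper: both multiply the generating functions \eqref{gfQ0} and \eqref{gfQ1} by the theta series $\sum_{k\geqslant 0}(-1)^{k(k+1)/2}q^{k(k+1)/2}$ (whose product form the paper imports from Gauss's identity \eqref{eq:Gauss21} rather than re-deriving via the triple product, as you do), simplify the resulting products to $(q^4;q^8)_\infty^2(q^8;q^8)_\infty=1+2\sum_{n\geqslant 1}(-1)^nq^{4n^2}$ and $(q^2,q^6,q^8;q^8)_\infty=\sum_{n\geqslant 0}(-1)^{n(n+1)/2}q^{n(n+1)}$, and compare coefficients. One cosmetic caveat: your parameter choices for the triple product (e.g.\ ``base $q^2$ and $z=-q$'') do not match the convention of \eqref{eq:JTP} as stated in the paper — under that convention one should take $q\mapsto q^4$ and $z=q^3$ to get $(q,q^3,q^4;q^4)_\infty$ — but the product identities you actually use are all correct.
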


The rest of this paper is organized as follows. In Section \ref{S2}, we provide analytic and combinatorial proofs of Theorem \ref{T:1}. 
In Section \ref{S3}, we   provide
proofs of Theorems \ref {T:2} and \ref{T:3} using  generating functions and also give a combinatorial proof of Theorem \ref{T:2} (ii) (a combinatorial proof of Theorem \ref{T:2}(i)  would be very welcome). In Section \ref{S4}, we  show that the identities of Theorem \ref{T:2} are limiting cases of much more general identities and use the latter to derive two infinite families of linear inequalities involving $pod(n)$. In Section \ref{RC}, we present   several Ramanujan type congruences for $b_4(n)$ modulo $16$ and $64$. In  Section \ref{S5}, we  obtain three identities of Watson type and prove them analytically and combinatorially, and  conclude  with conjectures for two infinite families of linear inequalities involving $Q_0(n)$ and $Q_2(n)$.

\section{Proof of Theorem \ref{T:1}}
\label{S2}

\subsection{Analytic Proof}

Define
\begin{align*}
F(z,q) := \prod_{k=0}^\infty \frac{1}{(1-zq^{4k+1})(1-zq^{4k+3})(1-zq^{4k+4})}. \end{align*}
Then,   using Euler's identity 
	$1/(q;q^2)_\infty=(-q;q)_\infty$ and \eqref{gfQ0}, we have 

	\begin{align*}
 F(-1,q)&
	= \frac{1}{(-q,-q^3,-q^4;q^4)_\infty}  
 	= \frac{(-q^2;q^4)_\infty}{(-q;q)_\infty} \\ & 
	= (q;q^2)_\infty(-q^2;q^4)_\infty 
	= \sum_{n=0}^\infty (-1)^n\, Q_0(n)\, q^n.
	\end{align*}
	On the other hand, the fact that  $$F(z,q)= \sum_{m=0}^\infty \sum_{n=0}^\infty pod(n,m)\, z^m\, q^n, $$ where $pod(n,m)$ is equal to the number of partitions of $n$ with $m$ parts none of which are  congruent to $2$ modulo $4$, implies that $$F(-1,q)= \sum_{n=0}^\infty \big( pod_{e}(n)-pod_o(n) \big)\, q^n,$$ which establishes (i).
	To prove (ii), we define
$$
G(z,q) := \prod_{k=0}^\infty \frac{1}{(1-zq^{4k+1})(1-zq^{4k+2})(1-zq^{4k+3})}.$$ Then,   using Euler's identity 
	and \eqref{gfQ1}, we have 
\begin{align*}
G(-1,q) & 
= \frac{1}{(-q,-q^2,-q^3;q^4)_\infty}   = \frac{(-q^4;q^4)_\infty}{(-q;q)_\infty} \\ & 
= (q;q^2)_\infty(-q^4;q^4)_\infty 
= \sum_{n=0}^\infty (-1)^n\, Q_2(n)\, q^n.
\end{align*}
Moreover, since
$$G(z,q) = \sum_{m=0}^\infty \sum_{n=0}^\infty b_4(n,m)\, z^m\, q^n, 
$$
where $b_4(n,m)$ is the number of $4$-regular partitions of $n$ with $m$ parts,  we also have
$$
 G(-1,q) =\sum_{n=0}^\infty \big( b_{4,e}(n)-b_{4,o}(n) \big)\, q^n,$$ which establishes (ii).

%

\subsection{Combinatorial Proof} 

For the rest of this article we will use calligraphy style capital letters to denote the set of partitions enumerated by the  function denoted by the same letters. For example, $\mathcal P(n)$ denotes the set of partitions of $n$ and $\mathcal{POD}_e(n)$ is the set of partitions of $n$ into an even number of parts which are not congruent to $2$ modulo $4$. Recalling that $pod(n)$ has more than one partition theoretic interpretation, in the sequel we assume that 
$\mathcal{POD}(n)$ represents the set of partitions of $n$ having distinct odd parts. 
  
If $\lambda$ is a partition of $n$, we say that the size of $\lambda$ is $n$ and write $|\lambda|=n$.  The length of  $\lambda$,  denoted by $\ell(\lambda)$, is  the number of parts in $\lambda$.  We make the convention that $\lambda_{\ell(\lambda)+1}=0$.  Given two partitions $\lambda$ and $\mu$, we denote by $\lambda \cup \mu$ the partition whose parts are precisely the parts of $\lambda$ and  $\mu$ (with multiplicity). If each part of $\mu$ is also a part of $\lambda$ with equal or larger multiplicity, we denote by $\lambda \setminus \mu$ the partition obtained from $\lambda$ by removing the parts of $\mu$ (with multiplicity). 
Finally,  we will often write a partition $\lambda$  as $(\lambda^e, \lambda^o)$, where $\lambda^e$ (respectively $\lambda^o$)  consists of the even  (respectively odd) parts of $\lambda$. 

We start by proving the first identity of Theorem \ref{T:1}.
Denote by $\widetilde\Q_0(n)$ the set of partitions of $n$ into odd parts repeated no more than three times. 
Glaisher's transformation \cite{Gl}, which iteratively splits each even part of a distinct partition into two equal parts  until all parts are odd,  yields a bijection between $\Q_0(n)$ and $\widetilde\Q_0(n)$. 

For any partition $\lambda \in \widetilde\Q_0(n)$, we have $\ell(\lambda)\equiv n \pmod 2$. Thus, if $n$ is even (respectively odd), $\widetilde\Q_0(n)$ is a subset of  $\p_e(n)$ (respectively $\p_o(n)$).

Let $\A(n)=(\p_e(n)\cup \p_o(n))\setminus \widetilde\Q_0(n)$.  Inspired by \cite{G76}, we  define an involution $\varphi:\A(n)\to \A(n)$ that reverses the parity of the length of partitions. First, we introduce more useful notation. For any partition $\lambda$, if $d$ is a part of  $\lambda$  with  multiplicity $m_d$, we denote by $s_d$ the nonnegative integer satisfying $2^{s_d}\leq m_d< 2^{s_d+1}$. 
If $\lambda\in \A(n)$ has an odd part $d$ with $m_d\geq 4$, we let  $r_\lambda$ be the largest value of $2^{s_d}d$ among all such parts (else  we let $r_\lambda=0$). If $\lambda^e=\emptyset$, it follows from the definition of $\A(n)$ that $r_\lambda\neq 0$. We define $\varphi(\lambda)$ as follows. 
\begin{enumerate}
\item[(i)] If $r_\lambda\geq \lambda_1^e$, merge $2^{s_d}$ copies of $d$ into a new part  to obtain $\varphi(\lambda)$. 
\item[(ii)] If $r_\lambda<\lambda_1^e$, split $\lambda_1^e=2^{k_1}c_1$, where $k_1\geq 1$ and $c_1$ is odd,  into $2^{k_1}$ parts equal to $c_1$ obtain $\varphi(\lambda)$. 
\end{enumerate} Partitions from case (i) are mapped by $\varphi$ to partitions from case (ii) and vice-versa. Moreover, $\varphi$ is its own inverse and  reverses the parity of the length of partitions.  
 This finishes the proof of the first identity of Theorem \ref{T:1}.

For the second identity,
denote by $\widetilde\Q_2(n)$ the set of partitions $\lambda=(\lambda^e, \lambda^o)$ of $n$   such that $\lambda^o$ has distinct parts and each part of $\lambda^e$ is congruent to $2$ modulo $4$ and has even multiplicity. We use the following   variant of Glaisher's transformation to create a bijection between $\Q_2(n)$ and $\widetilde\Q_2(n)$: given $\eta\in \Q_2(n)$, 
split each  part of $\eta$ having the form $2^k c$, with $k \geqslant 2$ and $c$  odd, into $2^{k-1}$ parts equal to $2c$. The inverse of this transformation iteratively merges equal parts of a partition in  $\widetilde\Q_2(n)$ until all parts are distinct.  Since even parts of partitions in $\widetilde\Q_2(n)$ have even multiplicity, all obtained even  parts are divisible by $4$. 

For any partition $\lambda\in \widetilde\Q_2(n)$ we have $\ell(\lambda) \equiv n \pmod 2$. Thus, if $n$ is even  (respectively odd), $\widetilde\Q_2(n)$ is a subset of  $\B_{4,e}(n)$ (respectively $\B_{4.o}(n)$).

Let $\B(n)=(\B_{4,e}(n)\cup \B_{4,o}(n))\setminus \widetilde\Q_2(n)$. Notice that for every partition $(\lambda^e, \lambda^o)\in \B(n)$, if $\lambda^o$ has distinct parts, then there is at least one part in $\lambda^e$ with odd multiplicity.  We  define a transformation $\varepsilon:\B(n)\to \B(n)$ as follows. 
\begin{enumerate}
\item[(i)] If $\lambda$ has a part $4k+2$ with odd multiplicity with the property  that all parts less than $2k+1$ in $\lambda^o$ have multiplicity one, let $4a+2$ be the smallest such part and define $\varepsilon(\lambda)=(\lambda^e\setminus (4a+2), \lambda^o\cup (2a+1, 2a+1))$. 

\item[(ii)] Else, let $2c+1$ be the smallest repeated part in $\lambda^o$ and define $\varepsilon(\lambda)=(\lambda^e \cup (4c+2), \lambda^o\setminus(2c+1, 2c+1))$. 
\end{enumerate}
Then  $\varepsilon$ is an involution on $\B(n)$ that  maps a partition  from case (i) to a partition from case (ii) and vice-versa,  and changes the length of a partition by one.  This finishes the proof of the second identity of Theorem \ref{T:1}.

\section{Proof of Theorems \ref{T:2} and \ref{T:3}}
\label{S3}

\subsection{Analytic proofs}

\allowdisplaybreaks{
The following theta identities are often attributed to Gauss \cite[p.23, Eqs. (2.2.12), (2.2.13)]{Andrews98}:
\begin{equation}\label{eq:Gauss22}
1+2\sum_{n=1}^{\infty} (-1)^n\, q^{n^2} = \frac {(q;q)_\infty} {(-q;q)_\infty}
\end{equation}
and
\begin{equation}\label{eq:Gauss21}
\sum_{n=0}^{\infty} (-1)^{n(n+1)/2}\, q^{n(n+1)/2} = \frac {(q^2;q^2)_\infty} {(-q;q^2)_\infty}.
\end{equation}
The Jacobi triple product identity (cf.\ \cite[Eq.~(1.6.1)]{GaRaAA}) states that
\begin{equation} \label{eq:JTP} 
(z;q)_\infty (q/z;q)_\infty (q;q)_\infty=
\sum_{n=-\infty}^\infty (-z)^n\, q^{n(n-1)/2}.
\end{equation}

We can write
\begin{align}
(-q,-q^2,-q^3;q^4)_\infty\cdot \frac{(q^2;q^2)_\infty}{(-q;q^2)_\infty}
& = (-q^2;q^4)_\infty (q^2;q^2)_\infty\nonumber\\
& = (q^4;q^8)_\infty (q^4;q^4)_\infty\nonumber\\
& = \frac{(q^4;q^4)_\infty}{(-q^4;q^4)_\infty}\nonumber\\
& = 1+2\sum_{n=1}^{\infty} (-1)^n\, q^{4n^2},\label{eq:GFb}
\end{align}
and
\begin{align}
(-q,-q^3,-q^4;q^4)_\infty\cdot \frac{(q^2;q^2)_\infty}{(-q;q^2)_\infty} 
& = (-q^4;q^4)_\infty (q^2;q^2)_\infty \nonumber \\
& = \frac{(q^2;q^2)_\infty}{(q^4;q^8)_\infty} \nonumber \\
& = (q^2,q^6,q^8;q^8)_\infty \nonumber \\
& = \sum_{n=-\infty}^\infty (-1)^n\, q^{2n(2n-1)}\nonumber \\ 
& = \sum_{n=0}^\infty (-1)^{n(n+1)/2}\, q^{n(n+1)},\label{eq:GFa}
\end{align}
where we have invoked \eqref{eq:JTP} with $q$ replaced by $q^8$ and $z$ replaced by $q^2$.
Thus, we deduce that
\begin{align*}
\sum_{n=0}^\infty Q_0(n)\, q^n
& = \frac{(-q;q^2)_\infty}{(q^2;q^2)_\infty} \left( 1+2\sum_{n=1}^\infty (-1)^{n}\, q^{4n^2}\right)  \\
& = \left( \sum_{n=0}^{\infty} pod(n)\, q^n \right) \left( 1+2\sum_{n=1}^\infty (-1)^{n}\, q^{4n^2}\right) \\
& = \sum_{n=0}^{\infty} \left( pod(n)+2\sum_{k=1}^\infty (-1)^{k}\, pod\big(n-4k^2\big) \right) q^n,
\end{align*}
and
\begin{align*}
\sum_{n=0}^\infty Q_2(n)\, q^n
& = \frac{(-q;q^2)_\infty}{(q^2;q^2)_\infty} \sum_{n=0}^\infty (-1)^{n(n+1)/2}\, q^{n(n+1)} \\
& = \left( \sum_{n=0}^{\infty} pod(n)\, q^n \right) \left( \sum_{n=0}^\infty (-1)^{n(n+1)/2}\, q^{n(n+1)} \right) \\
& = \sum_{n=0}^{\infty} \left( \sum_{k=0}^\infty (-1)^{k(k+1)/2}\, pod\big(n-k(k+1)\big) \right) q^n,
\end{align*}
from which  Theorem \ref{T:2} follows.

Using \eqref{eq:Gauss21}, the relations \eqref{eq:GFb} and \eqref{eq:GFa} can be written as 
\begin{equation*}
\left( \sum_{n=0}^\infty Q_0(n)\, q^n \right) \left( \sum_{n=0}^{\infty} (-1)^{n(n+1)/2}\, q^{n(n+1)/2} \right) 
= 1+2\sum_{n=1}^\infty (-1)^{n}\, q^{4n^2}
\end{equation*}
and
\begin{equation*}
\left( \sum_{n=0}^\infty Q_2(n)\, q^n \right) \left( \sum_{n=0}^{\infty} (-1)^{n(n+1)/2}\, q^{n(n+1)/2} \right) 
= \sum_{n=0}^\infty (-1)^{n(n+1)/2}\, q^{n(n+1)}.
\end{equation*}
We rewrite these identities as 
$$
\sum_{n=0}^\infty \left(\sum_{k=0}^\infty (-1)^{k(k+1)/2}\, Q_0\big(n-k(k+1)/2\big)  \right) q^n
=  1+2\sum_{n=1}^\infty (-1)^{n}\, q^{4n^2}
$$
and
$$
\sum_{n=0}^\infty \left(\sum_{k=0}^\infty (-1)^{k(k+1)/2}\, Q_2\big(n-k(k+1)/2\big)  \right) q^n
=  \sum_{n=0}^\infty (-1)^{n(n+1)/2}\, q^{n(n+1)},
$$
from which Theorem \ref{T:3} follows. 
\subsection{Combinatorial proof of Theorem \ref{T:2} (ii)} 
First, we introduce a graphical representation of  partitions.  The Ferrers diagram of a partition $\lambda=(\lambda_1, \lambda_2, \ldots, \lambda_i)$ is an array of left justified boxes such that the $i$th row from the top contains $\lambda_i$ boxes. 

 If $n$ and $k$ are nonnegative integers, we define a transformation on $\p(n-k(k+1))$ as follows.  Start with  $(\lambda^e, \lambda^o)\in \p(n-k(k+1))$.  The $2$-modular Ferrers diagram of $\lambda^e$ is  obtained from the ordinary Ferrers diagram of $\lambda^e$ by replacing two boxes at a time in each row and placing a $2$ in the resulting box. To this diagram we append at the top the rotated Ferrers diagram of the staircase of length $k$ with each box filled with $2$. Next,  starting in the upper left corner of the obtained diagram, we draw a zig-zag line beginning with a right step and continuing with pairs of alternating down and right steps for as long as both step segments border boxes of the diagram.

For example,  if $\lambda^e=(14, 14, 12, 12,8,4)$ and $k=3$, we obtain 

\begin{center}\tiny{\begin{tikzpicture}[inner sep=0in,outer sep=0in]\node (n) {\begin{varwidth}{0cm}{\begin{ytableau} 2 \\ 2&2\\ 2&2&2\\ 2&2&2&2&2&2&2 \\2&2&2&2&2&2&2\\2&2&2&2&2&2\\ 2&2&2&2&2&2\\ 2&2&2&2 \\ 2& 2 \end{ytableau}
}\end{varwidth}};
\draw[ultra thick] (-0,1.67)--(0.37,1.67)-- (0.37,1.3)--(.73,1.3)--(.73,.93)--(1.1,.93)--(1.1,.56)--(1.48,.56)--(1.48,.21)--(1.85,.21)--(1.85,-0.16)--(2.22,-0.16);
\end{tikzpicture}}\end{center}

The diagram defines two partitions into distinct even parts: the partition $\alpha$ whose $2$-modular Ferrers diagram is made up of  the columns below the zig-zag line, and the partition $\beta$ whose $2$-modular Ferrers diagram is formed by the rows to the right of the zig-zag line. In the example above, $\alpha=(18, 16, 12, 10, 6, 4)$ and $\beta=(6, 4)$.

The triple $(\lambda^o, \alpha, \beta)$ is completely determined by $(\lambda^e, \lambda^o)$ and $k$. Moreover $|\lambda^o|+|\alpha|+|\beta|=n$ and $k\leqslant \ell(\alpha)-\ell(\beta)\leqslant k+1$.  Denote by $\A_k(n)$ the set of triples of partitions $(\lambda^o, \alpha, \beta)$, where $\lambda^o$ is a partition into distinct odd parts, $\alpha$ and $\beta$ are partitions into distinct even parts, $|\lambda^o|+|\alpha|+|\beta|=n$, and $k\leqslant \ell(\alpha)-\ell(\beta)\leqslant k+1$. 
Then the transformation described above is  a bijection between $\p(n-k(k+1))$ and $\A_k(n)$ (for the inverse transformation we refer the reader to \cite[Section 2.1]{BM-mex}, where a similar transformation for ordinary Ferrers diagrams is defined). Thus, it remains to  show that 
\begin{equation}\label{q2a}\displaystyle{Q_2(n) = \sum_{k=0}^{\infty} (-1)^{k(k+1)/2}\, |\A_k(n)|}.\end{equation}

Let $\A(n)$ be the set of triples $(\lambda^o, \alpha, \beta)$, where $\lambda^o$ is a partition into distinct odd parts and $\alpha$ and $\beta$ are partitions into distinct even parts such that $|\lambda^o|+|\alpha|+|\beta|=n$ and $\ell(\alpha)- \ell(\beta)\geqslant 0$.

Each triple $(\lambda^o, \alpha, \beta)\in \A(n)$  with  $\ell(\alpha)>\ell(\beta)$ appears in exactly two of the sets $\A_k(n)$, namely when $k=\ell(\alpha)-\ell(\beta)$ and when $k=\ell(\alpha)-\ell(\beta)-1$. Triples $(\lambda^o, \alpha, \beta)\in \A(n)$ with $\ell(\alpha)=\ell(\beta)$ appear only in $\A_0(n)$.  Thus, from the parity of triangular numbers, the contribution of $(\lambda^o, \alpha, \beta)$ with $|\lambda^o|+|\alpha|+|\beta|=n$ to the righthand side of \eqref{q2a} is $$\begin{cases}  1  & \mbox{ if } \ell(\alpha)=\ell(\beta),\\  0 & \mbox{ if } \ell(\alpha)-\ell(\beta) \equiv 1 \pmod 2, \\   2 &  \mbox{ if } \ell(\alpha)-\ell(\beta) \equiv 0 \pmod 4, \\ -2 & \mbox{ if } \ell(\alpha)-\ell(\beta) \equiv 2 \pmod 4.\end{cases}$$

We denote by  $\M\A_0(n)$ be the multiset of triples $(\lambda^o, \alpha, \beta)\in \A(n)$ satisfying  $\alpha\neq \beta$, $\ell(\alpha)-\ell(\beta)\equiv 0\pmod 4$, and   if $\ell(\alpha)-\ell(\beta)>0$, the triple has multiplicity $2$ in $\M\A_0(n)$, and if $\ell(\alpha)-\ell(\beta)=0$, the triple  has multiplicity $1$. Similarly, we denote by  $\M\A_2(n)$ be the multiset of triples $(\lambda^o, \alpha, \beta)\in \A(n)$ with $\ell(\alpha)-\ell(\beta)\equiv 2\pmod 4$ and each triple has multiplicity $2$.

For any triple $(\lambda^o, \alpha, \beta)\in \A(n)$ let $i$ be the smallest positive integer such that $\alpha_i\neq \beta_i$ (we make the convention that $\beta_{\ell(\beta)+1}=0$). 

We now define a map from $\M\A_0(n)$ to $\M\A_2(n)$ as follows.
Start with $(\lambda^o, \alpha, \beta)\in\M\A_o(n)$ and suppose $\ell(\alpha)-\ell(\beta)=k$.

Case 1:  $\alpha_i<\beta_i$. Let $\tilde\alpha=\alpha \cup (\beta_i)$ and $\tilde \beta=\beta \setminus (\beta_i)$. Then, $\ell(\tilde\alpha)-\ell(\tilde\beta)=k+2$, $\tilde\alpha_i>\tilde\beta_i$, and   the first $i-1$ parts of $\tilde\alpha$ and $\tilde\beta$ are equal.  Moreover,  
 the triple $(\lambda^o,\tilde \alpha, \tilde\beta)$ lies in $\M\A_2(n)$. 

Case 2:   $\beta_i<\alpha_i$.  Let $\tilde\alpha=\alpha \setminus (\alpha_i)$ and $\tilde \beta=\beta \cup (\alpha_i)$. Then, $\ell(\tilde\alpha)-\ell(\tilde\beta)=k-2$, $\tilde\beta_i>\tilde\alpha_i$, and   the first $i-1$ parts of $\tilde\alpha$ and $\tilde\beta$ are equal.  
If $k>0$,  the triple $(\lambda^o,\tilde \alpha, \tilde\beta)$ lies in $\M\A_2(n)$.  If $k=0$, since $\ell(\tilde\beta)-\ell(\tilde\alpha)=2$, the triple   $(\lambda^o,\tilde \beta, \tilde\alpha)$ lies in $\M\A_2(n)$. Notice that this partition is also obtained from $(\lambda^o, \beta, \alpha)$ which is in Case 1. 

For the inverse of the transformation, start with $(\lambda^o, \tilde\alpha, \tilde\beta)\in\M\A_2(n)$ and suppose $\ell(\tilde\alpha)-\ell(\tilde\beta)=k$.

Case I: $\tilde\beta_i<\tilde\alpha_i$.  Let $\alpha=\tilde\alpha\cup(\tilde\beta_i)$ and $\beta=\tilde\beta\setminus(\tilde \beta_i)$. Then $\ell(\alpha)-\ell(\beta)=k+2$ and $(\lambda^o, \alpha, \beta)\in\M\A_o(n)$. 

Case II: $\tilde\alpha_i<\tilde\beta_i$.  Let $\alpha=\tilde\alpha\setminus(\tilde\alpha_i)$ and $\beta=\tilde\beta\cup(\tilde \alpha_i)$ Then $\ell(\alpha)-\ell(\beta)=k-2$ and $(\lambda^o, \alpha, \beta)\in\M\A_o(n)$. If $k=2$, one copy of $(\lambda^o, \tilde\alpha, \tilde\beta)$ is mapped to $(\lambda^o, \alpha, \beta)$ and the second copy is mapped to $(\lambda^o, \beta, \alpha)$.

Therefore, the transformation defined above is a bijection between $\M\A_0(n)$ and $\M\A_2(n)$. Then,  the righthand side of \eqref{q2a} is equal to the number of triples $(\lambda^o, \alpha, \beta)\in \A(n)$ with $\alpha=\beta$. These triples are in bijection with the partitions in $\Q_2(n)$ via the  mapping that takes $(\lambda^o, \alpha, \alpha)$ to $\lambda^o\cup 2\alpha$, where $2\alpha$ is the partition obtained from $\alpha$ by doubling each of its parts.

\section{Linear inequalities involving $pod(n)$}
\label{S4}

Linear inequalities involving partition functions have been studied extensively \cite{Andrews12,Andrews18,Guo,Katriel,Merca12,Merca16,Merca20,MK,MWY}.
For example, V.~J.~W.~Guo and J.~Zeng \cite{Guo} proved that
\begin{align*}
(-1)^{k-1} \sum_{j=0}^{2k-1} (-1)^{j(j+1)/2}\, pod\big(n-j(j+1)/2\big)\geqslant 0,
\end{align*}  for $n,k>0$.
Recently, G. E. Andrews and M. Merca \cite[Corollary 10]{Andrews18} established a 
partition theoretic interpretation of this inequality by showing that
\begin{align*}
(-1)^{k-1} \sum_{j=0}^{2k-1} (-1)^{j(j+1)/2}\, pod\big(n-j(j+1)/2\big) = MP_k(n),
\end{align*}
where $MP_k(n)$ is the number of partitions of $n$ in which the first part larger
than $2k-1$ is odd and appears exactly $k$ times and all other odd parts appear at
most once. For example, $MP_2(19)=10$, and the partitions in question are
\begin{align*}
& (9,9,1),\ (9,5,5),\ (8,5,5,1),\ (7,7,3,2),\ (7,7,2,2,1),\ (7,5,5,2),\\
& (6,5,5,3),\ (6,5,5,2,1),\ (5,5,3,2,2,2),\ (5,5,2,2,2,2,1). 
\end{align*}
Shortly after that, C. Ballantine, M. Merca, D. Passary and A. J. Yee \cite{BMPY} gave combinatorial proofs of this interpretation.

In this section, inspired by Theorem \ref{T:2}, we obtain new infinite families of linear inequalities for  $pod(n)$. To this end, we recall that 
an overpartition of $n$ is a nonincreasing sequence of natural numbers whose sum is $n$ in which the first occurrence of a number may be overlined \cite{Corteel}. For example, the eight  overpartitions of $3$ are
$$
(3),\ (\overline{3}),\ (2,1),\ (2,\overline{1}),\ (\overline{2},1),\ (\overline{2},\overline{1}),\ (1,1,1),\  (\overline{1},1,1).
$$
G. E. Andrews and M. Merca \cite{Andrews18} introduced the  function
$\overline{M}_k(n)$ which counts the number of overpartitions of $n$ in which the first part larger than $k$ appears at least $k+1$ times. For example, $\overline{M}_2(12)=16$, with the relevant overpartions being 
\begin{align*}
& (4,4,4),\,
(\overline{4},4,4),\,
(3,3,3,3),\,
(\overline{3},3,3,3),\,
(3,3,3,2,1),\,
(3,3,3,\overline{2},1),\\
& (3,3,3,2,\overline{1}),\,
(3,3,3,\overline{2},\overline{1}),\,
(\overline{3},3,3,2,1),\,
(\overline{3},3,3,\overline{2},1),\,
(\overline{3},3,3,2,\overline{1}),\\
& (\overline{3},3,3,\overline{2},\overline{1}),\,
(3,3,3,1,1,1),\,
(3,3,3,\overline{1},1,1),\,
(\overline{3},3,3,1,1,1),\,
(\overline{3},3,3,\overline{1},1,1).
\end{align*}
We now prove an identity that has Theorem \ref{T:2}.(i) as its limiting case when $k\to\infty$.

\begin{theorem}\label{T:4}
	For $n,k>0$, we have
	$$
	(-1)^k \left( pod(n)+2 \sum_{j=1}^{k} (-1)^{j}\, pod\big(n-4j^2\big) - Q_0(n) \right) 
	= \sum_{j=0}^{\left\lfloor n/4 \right\rfloor} Q_0(n-4j)\, \overline{M}_k(j).
	$$ 
\end{theorem}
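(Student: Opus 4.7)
The plan is to derive Theorem~\ref{T:4} from the truncated version of Gauss's identity \eqref{eq:Gauss22} established by Andrews and Merca in \cite{Andrews18}. In that paper, $\overline{M}_k(n)$ is introduced precisely as the partition-theoretic quantity governing the truncation error in \eqref{eq:Gauss22}, and the corresponding generating-function identity reads
\[
\left(1 + 2\sum_{j=1}^k (-1)^j q^{j^2}\right) \frac{(-q;q)_\infty}{(q;q)_\infty} = 1 + (-1)^k \sum_{n=0}^\infty \overline{M}_k(n)\, q^n,
\]
which I will take as given.

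The first step is to replace $q$ by $q^4$ throughout this identity and then multiply both sides by the correction factor
\[
F(q) := \frac{(-q;q^2)_\infty\,(q^4;q^4)_\infty}{(q^2;q^2)_\infty\,(-q^4;q^4)_\infty}.
\]
Multiplying by $F(q)$ replaces the left-hand factor $(-q^4;q^4)_\infty/(q^4;q^4)_\infty$ by $pod(q) = (-q;q^2)_\infty/(q^2;q^2)_\infty$, by \eqref{podgen}. Next, using $(q^2;q^2)_\infty = (q^2;q^4)_\infty(q^4;q^4)_\infty$, Euler's identity $(-q^2;q^2)_\infty(q^2;q^4)_\infty = 1$, and $(-q;q)_\infty = (-q;q^2)_\infty(-q^2;q^2)_\infty$, one telescopes $F(q) = (-q;q)_\infty/(-q^4;q^4)_\infty = (-q,-q^2,-q^3;q^4)_\infty = \sum_n Q_0(n)\,q^n$ via \eqref{gfQ0}. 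The identity thus becomes
\[
pod(q)\left(1 + 2\sum_{j=1}^k (-1)^j q^{4j^2}\right) = Q_0(q) + (-1)^k\, Q_0(q)\sum_{n=0}^\infty \overline{M}_k(n)\, q^{4n}.
\]

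To finish, I isolate $Q_0(q)$, multiply through by $(-1)^k$, and extract the coefficient of $q^n$ on both sides using the Cauchy product. The left side contributes exactly $(-1)^k\bigl(pod(n) + 2\sum_{j=1}^k(-1)^j\,pod(n-4j^2) - Q_0(n)\bigr)$ and the right side contributes $\sum_{j=0}^{\lfloor n/4\rfloor} Q_0(n-4j)\,\overline{M}_k(j)$, giving the claimed identity. As a consistency check, $\overline{M}_k(j) = 0$ whenever $j < (k+1)^2$, so for fixed $n$ the right-hand side vanishes as $k\to\infty$ and Theorem~\ref{T:2}(i) is recovered in the limit.

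The only real obstacle is the truncated Gauss identity displayed above: once it is accepted, all subsequent steps consist of routine $q$-series algebra and the extraction of power-series coefficients.
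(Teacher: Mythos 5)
Your proposal is correct and follows essentially the same route as the paper: both start from the Andrews--Merca truncated form of Gauss's identity \eqref{eq:Gauss22} with the error term identified as the generating function of $\overline{M}_k(n)$, replace $q$ by $q^4$, multiply by $(-q,-q^2,-q^3;q^4)_\infty=\sum_n Q_0(n)q^n$ (your factor $F(q)$ is exactly this product, as your telescoping confirms), and compare coefficients of $q^n$.
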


\begin{proof}
		According to G. E. Andrews and M. Merca \cite[Theorem 7]{Andrews18}, we have  
	the following truncated version of  \eqref{eq:Gauss22}:
	\begin{align}
	& \frac{(-q;q)_{\infty}} {(q;q)_{\infty}} \left(1 + 2 \sum_{j=1}^{k} (-1)^j\, q^{j^2} \right)  \label{eq1} \\
	& \qquad = 1+ (-1)^k \frac{(-q;q)_k}{(q;q)_k} \sum_{j=k+1}^{\infty}
	\frac{2\,q^{j(k+1)}}{1-q^j}\cdot \frac{(-q^{j+1};q)_{\infty}}{(q^{j+1};q)_{\infty}}. \notag
	\end{align}	
	As explained in \cite[Proof of Corollary 8]{Andrews18}, the series on the right hand side  of this identity is the generating function for $\overline{M}_k(n)$, i.e.,
	$$
	\sum_{n=0}^\infty \overline{M}_k(n)\, q^n =  \frac{(-q;q)_k}{(q;q)_k} \sum_{j=k+1}^{\infty}
	\frac{2\,q^{j(k+1)}}{1-q^j}\cdot \frac{(-q^{j+1};q)_{\infty}}{(q^{j+1};q)_{\infty}}.
	$$
	Therefore, replacing $q$ by $q^4$ in  \eqref{eq1} yields
	\begin{align*}
	& \frac{(-q^4;q^4)_{\infty}} {(q^4;q^4)_{\infty}} \left(1 + 2 \sum_{j=1}^{k} (-1)^j\, q^{4j^2} \right) = 1+ (-1)^k \sum_{n=0}^\infty \overline{M}_k(n)\, q^{4n}.
	\end{align*}
	Multiplying both sides of this identity by 
	$(-q,-q^2,-q^3;q^4)_\infty$
	we obtain
	\begin{align*}
	& \frac{(-q;q^2)_{\infty}} {(q^2;q^2)_{\infty}} \left(1 + 2 \sum_{j=1}^{k} (-1)^j\, q^{4j^2} \right) - (-q,-q^2,-q^3;q^4)_\infty\\
	& \qquad\qquad = (-1)^k\, (-q,-q^2,-q^3;q^4)_\infty \sum_{n=0}^\infty \overline{M}_k(n)\, q^{4n}.
	\end{align*}	
Then, using \eqref{podgen} and \eqref{gfQ0}, the identity becomes
	\begin{align*}
& \left(\sum_{n=0}^\infty pod(n)\, q^n \right)  \left(1 + 2 \sum_{j=1}^{k} (-1)^j\, q^{4j^2} \right) - 
\sum_{n=0}^\infty Q_0(n)\, q^n\\
& \qquad\qquad = (-1)^k \left( \sum_{n=0}^\infty Q_0(n)\, q^n \right)  \left( \sum_{n=0}^\infty \overline{M}_k(n)\, q^{4n} \right) 
\end{align*}	
The  theorem follows by comparing coefficients of $q^n$ on the two sides
of this equation.
\end{proof}

As a consequence of Theorem \ref{T:4} we obtain the following infinite family of linear inequalities involving  $pod(n)$.

\begin{corollary}\label{C4.2}
	For $n,k>0$, we have
	$$
	(-1)^k \left( pod(n)+2 \sum_{j=1}^{k} (-1)^{j}\, pod\big(n-4j^2\big) - Q_0(n) \right) 
	\geqslant 0,
	$$ 
	with strict inequality if and only if $n\geqslant 4(k+1)^2$. \end{corollary}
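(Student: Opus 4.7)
The plan is to derive the corollary directly from Theorem~\ref{T:4}, whose right-hand side is manifestly a sum of non-negative integers. Indeed, writing
\[
R_k(n) := \sum_{j=0}^{\lfloor n/4\rfloor} Q_0(n-4j)\,\overline{M}_k(j),
\]
both factors in each summand are cardinalities of sets of (over)partitions, hence $R_k(n)\geqslant 0$ for all $n,k$. Theorem~\ref{T:4} identifies the left-hand side of the corollary with $R_k(n)$, giving the asserted inequality at no extra cost.

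The substantive content is therefore the characterization of when $R_k(n) > 0$. My plan for this is to analyze the two factors separately. First I would observe that $Q_0(m)\geqslant 1$ for every $m\geqslant 0$: the empty partition gives $Q_0(0)=1$, any $m$ with $m\not\equiv 0\pmod 4$ is itself a singleton valid partition, and when $m\equiv 0\pmod 4$ (so $m\geqslant 4$), the pair $(m-1,1)$ is a partition of $m$ into two distinct parts neither divisible by $4$. Thus the factor $Q_0(n-4j)$ never vanishes on $0\leqslant j\leqslant \lfloor n/4\rfloor$, and positivity of $R_k(n)$ is governed entirely by $\overline{M}_k$.

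Next I would pinpoint the smallest $j$ with $\overline{M}_k(j)>0$. By the definition of $\overline{M}_k$, an overpartition counted by $\overline{M}_k(j)$ must contain some part $>k$ appearing at least $k+1$ times; the minimal such overpartition consists of $k+1$ copies of the part $k+1$, contributing total weight $(k+1)^2$. Hence $\overline{M}_k(j)=0$ for $j<(k+1)^2$ and $\overline{M}_k\bigl((k+1)^2\bigr)\geqslant 1$. Combining this with the previous step, $R_k(n)>0$ if and only if $\lfloor n/4\rfloor\geqslant (k+1)^2$, equivalently $n\geqslant 4(k+1)^2$, which is exactly the strictness condition stated in the corollary.

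I do not anticipate any real obstacle: Theorem~\ref{T:4} has already done the analytic work, and what remains is a short non-negativity/minimality check. The only subtlety worth double-checking in writing the argument is the minimality claim for $\overline{M}_k$ (that the $(k+1)$-fold repetition of the part $k+1$ is genuinely the smallest overpartition fulfilling the defining condition), and the uniform positivity of $Q_0$ on $\mathbb{Z}_{\geqslant 0}$; both are elementary.
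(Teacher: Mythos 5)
Your proposal is correct and follows the same route the paper intends: Corollary~\ref{C4.2} is stated as an immediate consequence of Theorem~\ref{T:4}, whose right-hand side is a sum of products of non-negative partition counts. The details you supply — that $Q_0(m)\geqslant 1$ for all $m\geqslant 0$ and that the minimal $j$ with $\overline{M}_k(j)>0$ is $j=(k+1)^2$ (realized by $k+1$ copies of the part $k+1$) — are exactly the checks the paper leaves implicit, and they correctly yield the strictness threshold $n\geqslant 4(k+1)^2$.
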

\begin{remark}	For example,
	\begin{align*}
	& pod(n)-2pod(n-4) \leqslant Q_0(n),\\
	& pod(n)-2pod(n-4)+2pod(n-16) \geqslant Q_0(n),\\
	& pod(n)-2pod(n-4)+2pod(n-16)-2pod(n-36) \leqslant Q_0(n),\\
	& pod(n)-2pod(n-4)+2pod(n-16)-2pod(n-36)+2pod(n-64) \geqslant Q_0(n).\\
	\end{align*}
	\end{remark}
Our next result has  Theorem \ref{T:2}.(ii) as its limiting case when $k\to\infty$.

\begin{theorem}\label{T:5}
	For $n,k>0$, we have
	$$
	(-1)^{k} \left(Q_2(n) - \sum_{j=0}^{2k-1} (-1)^{j(j+1)/2}\, pod\big(n-j(j+1)\big) \right) 
	= \sum_{j=0}^{\left\lfloor n/2 \right\rfloor} Q_2(n-2j)\, MP_k(j).
	$$ 
\end{theorem}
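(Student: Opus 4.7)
The plan is to mirror the proof of Theorem~\ref{T:4}, now starting from the Andrews--Merca truncated form of Gauss's identity \eqref{eq:Gauss21} instead of \eqref{eq:Gauss22}. In generating function form the relevant truncation reads
$$
\frac{(-q;q^2)_\infty}{(q^2;q^2)_\infty}\sum_{j=0}^{2k-1}(-1)^{j(j+1)/2}q^{j(j+1)/2} = 1+(-1)^{k-1}\sum_{n=1}^\infty MP_k(n)\,q^n,
$$
and coefficient comparison recovers the identity $(-1)^{k-1}\sum_{j=0}^{2k-1}(-1)^{j(j+1)/2}pod(n-j(j+1)/2)=MP_k(n)$ quoted just before Theorem~\ref{T:4}; this is the $MP_k$-analog of \eqref{eq1} from \cite{Andrews18}.

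First I would replace $q$ by $q^2$ throughout, producing the theta sum $\sum(-1)^{n(n+1)/2}q^{n(n+1)}$ already appearing in \eqref{eq:GFa} and in Theorem~\ref{T:2}(ii). Next I would multiply both sides of the resulting identity by $(-q,-q^3,-q^4;q^4)_\infty$, which is the generating function for $Q_2(n)$ by \eqref{gfQ1}. The key algebraic simplification on the left is
$$
(-q,-q^3,-q^4;q^4)_\infty\cdot\frac{(-q^2;q^4)_\infty}{(q^4;q^4)_\infty} = \frac{(-q;q)_\infty}{(q^4;q^4)_\infty} = \frac{(-q;q^2)_\infty}{(q^2;q^2)_\infty},
$$
where the first equality regroups the four overline factors by residues modulo~$4$ into $(-q;q)_\infty$ and the second uses Euler's identity $(-q^2;q^2)_\infty=1/(q^2;q^4)_\infty$; by \eqref{podgen} this product equals $\sum_n pod(n)\,q^n$.

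After these manipulations the identity becomes
$$
\sum_{n\geqslant 0}pod(n)\,q^n\cdot\sum_{j=0}^{2k-1}(-1)^{j(j+1)/2}q^{j(j+1)} - \sum_{n\geqslant 0}Q_2(n)\,q^n = (-1)^{k-1}\sum_{n\geqslant 0}Q_2(n)\,q^n\cdot\sum_{n\geqslant 0}MP_k(n)\,q^{2n},
$$
where I have used $MP_k(0)=0$ to extend the $MP_k$-series back to $n=0$. Multiplying both sides by $(-1)^{k-1}=-(-1)^k$ and comparing coefficients of $q^n$ on the two sides now produces Theorem~\ref{T:5}. The bulk of the argument is formal manipulation parallel to the proof of Theorem~\ref{T:4} step for step, so I do not expect a serious conceptual obstacle; the main item that requires verification is the algebraic factorization displayed above, which reduces to grouping $(-q;q)_\infty$ by residues modulo~$4$ together with Euler's identity.
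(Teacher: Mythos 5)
Your proposal is correct and follows the paper's own proof essentially step for step: both start from the Andrews--Merca truncated form of \eqref{eq:Gauss21} with the right-hand side recognized as the generating function for $MP_k(n)$, replace $q$ by $q^2$, multiply by $(-q,-q^3,-q^4;q^4)_\infty$, simplify the left side to the $pod$ generating function, and compare coefficients. The algebraic factorization you flag for verification is valid and is the same simplification the paper performs implicitly.
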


\begin{proof}
			According to G. E. Andrews and M. Merca \cite[Theorem 9]{Andrews18}, we have  
	the following truncated version of  \eqref{eq:Gauss21}:
	\begin{align}
	& \frac{(-q;q^2)_{\infty}} {(q^2;q^2)_{\infty}} \sum_{j=0}^{2k-1} (-q)^{j(j+1)/2}   \notag\\
	& \qquad = 1+ (-1)^{k-1} \frac{(-q;q^2)_k}{(q^2;q^2)_{k-1}} \sum_{j=k}^{\infty}
	\frac{q^{k(2j+1)}\, (-q^{2j+3};q^2)_\infty}{(q^{2j+2};q^2)_\infty}. \label{eq1a}
	\end{align}	
	As explained in \cite[Proof of Corollary 10]{Andrews18}, the series on the right hand side  of this identity is the generating function for $MP_k(n)$, i.e.,
	$$
	\sum_{n=0}^\infty MP_k(n)\, q^n =  \frac{(-q;q^2)_k}{(q^2;q^2)_{k-1}} \sum_{j=k}^{\infty}
	\frac{q^{k(2j+1)}\,(-q^{2j+3};q^2)_\infty}{(q^{2j+2};q^2)_\infty}.
	$$
	Therefore, replacing $q$ by $q^2$ in   \eqref{eq1a} yields
	\begin{align*}
	& \frac{(-q^2;q^4)_{\infty}} {(q^4;q^4)_{\infty}} \sum_{j=0}^{2k-1} (-1)^{j(j+1)/2}\, q^{j(j+1)}  
	 = 1+ (-1)^{k-1} \sum_{n=0}^\infty MP_k(n)\, q^{2n}.
	\end{align*}
	Multiplying both sides of this identity by 
	$(-q,-q^3,-q^4;q^4)_\infty$
	we obtain
	\begin{align*}
	& \frac{(-q;q^2)_{\infty}} {(q^2;q^2)_{\infty}} \sum_{j=0}^{2k-1} (-1)^{j(j+1)/2}\, q^{j(j+1)}  
	-(-q,-q^3,-q^4;q^4)_\infty\\
	&\qquad\qquad = (-1)^{k-1}\, (-q,-q^3,-q^4;q^4)_\infty \sum_{n=0}^\infty MP_k(n)\, q^{2n}.
	\end{align*}
	Then, using \eqref{podgen} and \eqref{gfQ1}, the identity becomes
	\begin{align*}
	& \left( \sum_{n=0}^\infty pod(n)\, q^n \right)  \left( \sum_{j=0}^{2k-1} (-1)^{j(j+1)/2}\, q^{j(j+1)}  \right) 
	- \sum_{n=0}^\infty Q_2(n)\, q^n \\
	&\qquad\qquad = (-1)^{k-1} \left(  \sum_{n=0}^\infty Q_2(n)\, q^n \right) \left(  \sum_{n=0}^\infty MP_k(n)\, q^{2n} \right).
	\end{align*}
	The  theorem follows by comparing coefficients of $q^n$ on the two sides
of this equation.
\end{proof}

As a consequence of Theorem \ref{T:5} we obtain the following infinite family of linear inequalities involving  $pod(n)$.

\begin{corollary}\label{C4.4}
	For $n,k>0$, we have
	$$
	(-1)^{k} \left( Q_2(n) - \sum_{j=0}^{2k-1} (-1)^{j(j+1)/2}\, pod\big(n-j(j+1)\big)  \right) 
	\geqslant 0,
	$$ 
	with strict inequality if and only if $n\geqslant 2k(2k+1)$.\end{corollary}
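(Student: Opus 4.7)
The plan is to derive Corollary~\ref{C4.4} as a direct consequence of the exact identity in Theorem~\ref{T:5}. That theorem asserts that the quantity on the left-hand side of the inequality in Corollary~\ref{C4.4} equals
\[
\sum_{j=0}^{\lfloor n/2 \rfloor} Q_2(n-2j)\, MP_k(j),
\]
a finite sum of products of partition-counting functions, hence manifestly nonnegative. This immediately yields the weak inequality, and reduces the corollary to characterizing precisely when the sum above is strictly positive.

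To do that, I would first note that any partition enumerated by $MP_k(j)$ must contain at least $k$ copies of some odd part not smaller than $2k+1$, so such a partition has size at least $k(2k+1)$. Hence $MP_k(j) = 0$ for every $j < k(2k+1)$. When $n < 2k(2k+1)$, the entire summation range $j \in \{0,1,\ldots,\lfloor n/2\rfloor\}$ lies strictly below this threshold, so every term vanishes and equality holds in Corollary~\ref{C4.4}.

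For the converse, for each $n \geqslant 2k(2k+1)$ I must exhibit at least one index $j$ in the summation range contributing a positive term. The natural choice is $j_0 = k(2k+1)$, which lies in $\{0,\ldots,\lfloor n/2\rfloor\}$ and satisfies $MP_k(j_0) \geqslant 1$ via the witness partition $((2k+1)^k)$. One then checks $Q_2(n - 2j_0) > 0$. The only subtle point is that $Q_2(m)$ is positive for every $m \geqslant 0$ except $m = 2$, as one verifies directly by exhibiting a partition: take $(m)$ when $m$ is odd or $m \equiv 0 \pmod{4}$, and $(m-1,1)$ when $m \equiv 2 \pmod{4}$ with $m \geqslant 6$. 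The only case where $j_0$ fails is therefore $n = 2k(2k+1)+2$; there I would substitute $j_1 = k(2k+1)+1$, observing that $MP_k(j_1) \geqslant 1$ via $((2k+1)^k,1)$ and $Q_2(n-2j_1) = Q_2(0) = 1$. This small $Q_2(2)=0$ anomaly is the only real obstacle; the rest is bookkeeping once Theorem~\ref{T:5} is in hand.
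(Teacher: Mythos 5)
Your proposal is correct and follows the same route as the paper, which states Corollary~\ref{C4.4} as an immediate consequence of Theorem~\ref{T:5} via the nonnegativity of the sum $\sum_{j} Q_2(n-2j)\,MP_k(j)$. The paper does not spell out the strictness criterion, and your analysis of it is sound: $MP_k(j)=0$ for $j<k(2k+1)$ since such a partition contains $k$ copies of an odd part at least $2k+1$, the witness partitions $((2k+1)^k)$ and $((2k+1)^k,1)$ are valid, and your observation that $Q_2(m)>0$ for all $m\geqslant 0$ except $m=2$ correctly handles the one exceptional case $n=2k(2k+1)+2$.
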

	\begin{remark} For example,
	\begin{align*}
	& pod(n)-pod(n-2) \geqslant Q_2(n),\\
	& pod(n)-pod(n-2)-pod(n-6)+pod(n-12) \leqslant Q_2(n),\\
	& pod(n)-pod(n-2)-pod(n-6)+pod(n-12)\\
	& \qquad\qquad\qquad\qquad+pod(n-20)-pod(n-30) \geqslant Q_2(n),\\
	& pod(n)-pod(n-2)-pod(n-6)+pod(n-12)\\
	& \qquad\qquad+pod(n-20)-pod(n-30)-pod(42)+pod(56) \leqslant Q_2(n).
	\end{align*}
	\end{remark}

\section{Ramanujan type congruences} \label{RC}

In recent years, many congruences for the number of $\ell$-regular partitions have been discovered
by G.~E.~Andrews, M.~D.~Hirschhorn and J.~A.~Sellers \cite{Andrews10},
S.~C.~Chen \cite{Chen},
S.-P.~Cui and N.~S.~S.~Gu \cite{Cui,Cui14}, 
B.~Dandurand and D.~Penniston \cite{Dand}, 
D.~Furcy and D.~Penniston  \cite{Furcy}, 
B.~Gordon and K.~Ono \cite{Gordon}, 
W.~J.~Keith \cite{Keith}, 
B.~L.~S.~Lin and A.~Y.~Z.~Wang \cite{Lin}, 
J.~Lovejoy and D.~Penniston \cite{Lovejoy}, 
D.~Penniston \cite{Penn,Penn8}, 
E.~X.~W.~Xia \cite{Xia14b,Xia}, 
E.~X.~W.~Xia and O.~X.~M.~Yao \cite{Xia14,Xia14a}, 
O.~X.~M.~Yao \cite{Yao},
and J.~J.~Webb \cite{Webb}.

For  example, for $\alpha\geqslant 1$ and $n\geqslant 0$,  from \cite{Andrews10}  we have
\begin{align*}
& b_4\left( 3^{2\alpha+1} n + \frac{17\cdot 3^{2\alpha}-1}{8}\right) \equiv 0 \pmod 2, \\
& b_4\left( 3^{2\alpha+2} n + \frac{11\cdot 3^{2\alpha+1}-1}{8}\right) \equiv 0 \pmod 2, \\
& b_4\left( 3^{2\alpha+2} n + \frac{19\cdot 3^{2\alpha+1}-1}{8}\right) \equiv 0 \pmod 2,
\end{align*}
and for $r\in\{ 13,21,29,37\}$,  from \cite{Chen}  we have 
$$
b_4\left( 5^{2\alpha+2} n + \frac{r\cdot 5^{2\alpha+1}-1}{8}\right) \equiv 0 \pmod 4.
$$
From  \cite{Xia14b} ,  for  $\alpha\geqslant 0$, $n\geqslant 0$ and $r\in\{11,19\}$, we have 
$$
b_4\left( 3^{4\alpha+4} n + \frac{r\cdot 3^{4\alpha+3}-1}{8}\right) \equiv 0 \pmod 8.
$$

To facilitate the study of  partition functions, S. Radu \cite{Radu1a,Radu1}
considered a class of  functions $a(n)$ defined by
\begin{align}
\sum_{n=0}^\infty a(n)\,q^n = \prod_{\delta |M} (q^{\delta};q^{\delta})_\infty^{r_\delta},\label{R1}
\end{align}
where the product is over the positive divisors of $M>0$ and 
$r_\delta\in \mathbb Z$. Using  the ideas of H.~Rademacher \cite{Rademacher}, M.~Newman \cite{Newman1,Newman2} and O.~Kolberg \cite{Kolberg},  Radu \cite{Radu1a} discovered a method for establishing 
 congruences of the form
$$a(mn+t) \equiv 0 \pmod{u},$$
for fixed $m$, $t$ and $u$, and any $n \geqslant 0$, 
and developed the so-called Ramanujan–Kolberg algorithm
 \cite{Radu1} for deriving identities involving the generating functions of $a(mn + t)$ and modular functions for $\Gamma_0(N)$
(a description of this algorithm can be found in P.~Paule and S.~Radu \cite{Paule}).
Very recently, N.~A.~Smoot \cite{Smoot} provided a successful Mathematica implementation of Radu's algorithm. The package is called \texttt{RaduRK} and requires \texttt{4ti2}, a software package for algebraic, geometric and combinatorial problems on linear spaces. Instructions for the proper installation of these packages can be found in \cite{Smoot}. 

The procedure
$$\texttt{RK[N,M,r,m,j]}$$
take as input an  integer $N\geqslant 2$ which defines the congruence subgroup $\Gamma_0(N)$, a generating function defined by $M$ and $r=(r_\delta)_{\delta | M}$ as in \eqref{R1}, and an arithmetic progression $mn+j$ with $0\leqslant j < m$. The algorithm decides if there exists an identity of the form
$$
f(q)\cdot \prod_{j'\in P}\left(  \sum_{n=0}^\infty a(mn+j')\,q^n \right) = \sum_{g\in A} g\cdot p_g(t),
$$
where
$$A = \{1,g_1,\ldots,g_{u} \}\qquad\text{and}\qquad
\{p_g(t)\}_{g\in A} = \{p_1,p_{g_1},\ldots,p_{g_{u}}\}.$$
We remark that $t,g_1,\ldots,g_{u}$ are modular function for  $\Gamma_0(N)$.
For the definition of these notions  
and  a general introduction to \texttt{RaduRK} algorithm, see \cite{Paule}. 
For the correctness proof and details of the algorithm, see \cite{Smoot}.


In this section, we use the implemented \texttt{RaduRK} algorithm to prove several congruences modulo $16$  for $b_4(n)$.   
Letting $M=4$ and  $r_1=-1, r_2=0, r_4=1$ in \eqref{R1} yields the generating function for $b_4(n)$.
 
\begin{theorem}\label{TH5.1}
	Let $\alpha\in\{8,13,18,23\}$.	For all $n\geqslant 0$, we have
	$$b_4(25n+\alpha) \equiv 0\pmod {16}.$$
\end{theorem}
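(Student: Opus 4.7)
The plan is to prove Theorem \ref{TH5.1} by applying the Ramanujan-Kolberg algorithm of Radu, via Smoot's \texttt{RaduRK} Mathematica implementation described above. The generating function $\sum_{n\geqslant 0} b_4(n)\, q^n = (q^4;q^4)_\infty/(q;q)_\infty$ fits \eqref{R1} with $M=4$ and exponents $(r_1,r_2,r_4) = (-1,0,1)$, and the modulus in question is $m = 25$. I would invoke \texttt{RK[N, 4, \{-1,0,1\}, 25, $\alpha$]} for a suitable level $N$ of $\Gamma_0(N)$ (natural first attempts are $N = 20$ or $N = 40$) and for each $\alpha \in \{8,13,18,23\}$; since all four residues lie in the class $3 \pmod 5$, they may group into a single orbit $P$ under the algorithm's action, allowing a uniform treatment.

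The procedure returns an identity of the form
\begin{equation*}
f(q) \cdot \prod_{j' \in P}\left(\sum_{n=0}^\infty b_4(25n+j')\, q^n\right) = \sum_{g \in A} g(q)\, p_g(t(q)),
\end{equation*}
where $t$ is a Hauptmodul of $\Gamma_0(N)$, $f$ is an explicit eta-quotient in $\mathbb Z[[q]]$ (up to a normalizing power of $q$), $A$ is a finite list of auxiliary modular functions with integer $q$-expansions, and each $p_g \in \mathbb Z[\tau]$ is a polynomial. The theorem then reduces to checking that every coefficient of every $p_g$ is divisible by $16$: the right-hand side then lies in $16\,\mathbb Z[[q]]$, and dividing by $f(q)$ (which is a unit in $\mathbb Z_{(2)}[[q]]$) transfers this divisibility to the product of generating series on the left. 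If $P$ is a singleton $\{\alpha\}$, the congruence $b_4(25n+\alpha) \equiv 0 \pmod{16}$ follows immediately; otherwise, rerunning \texttt{RaduRK} with finer parameters (for instance a larger level $N$) forces $P$ to split into singletons, so the four residues in $\{8,13,18,23\}$ can be handled one at a time.

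The main obstacle is the computational choice of parameters: selecting the smallest level $N$ for which \texttt{RaduRK} terminates with a manageable output whose polynomial coefficients are indeed all divisible by $16$. Once such an $N$ is found, the remaining verification is mechanical, and the proof is completed entirely from the algorithm's output together with an inspection of integer coefficients modulo $16$.
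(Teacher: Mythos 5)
Your overall strategy is the same as the paper's: feed $M=4$, $(r_1,r_2,r_4)=(-1,0,1)$, $m=25$ into Smoot's \texttt{RaduRK} (the paper indeed uses $N=20$) and read off divisibility of the polynomial coefficients $p_g$. However, there is a genuine gap in how you handle the case where $P$ is not a singleton, and that case is exactly what occurs here. The orbit $P$ is determined by $m$, $M$, $r$ and the residue $j$ alone (it is the orbit of $j$ under the action built into Radu's $\Delta^\ast$ framework); it does not depend on the level $N$, so ``rerunning with a larger $N$ to force $P$ to split into singletons'' is not an available move. For this problem the algorithm returns $P=\{8,23\}$ and $P=\{13,18\}$, so what you actually obtain is a congruence for the \emph{product} of two generating series, and divisibility of a product by $16$ does not distribute to its factors.

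The paper closes this gap with two extra ingredients that your proposal lacks. First, the algorithm's output for these orbits is stronger than you ask for: all coefficients of the $p_g$ are divisible by $256=16^2$, giving
$\bigl(\sum_n b_4(25n+8)q^n\bigr)\bigl(\sum_n b_4(25n+23)q^n\bigr)\equiv 0 \pmod{256}$ and similarly for $\{13,18\}$. Second, one checks the specific values $b_4(8)=16$, $b_4(23)=592$, $b_4(38)=8528$, $b_4(18)=208$, none of which is divisible by $32$; since the $2$-adic content of a product of integer power series is the sum of the contents of the factors (Gauss's lemma over $\mathbb{Z}_2$), the content of each factor is at most $2^4$ and the two contents sum to at least $2^8$, forcing each factor to have content exactly $2^4$, i.e.\ each individual series is divisible by $16$. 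Without both the mod-$256$ strengthening and this valuation-splitting argument, your proof does not reach the stated congruences.
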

Since,   $b_4(8)=16$,  $b_4(25+13)=8528$,  $b_4(18)=208$ and  $b_4(23)=592$, we have that 
$$
\sum_{n=0}^\infty b_4(25n+\alpha)\,q^n \not\equiv 0 \pmod{32},
$$ for all $\alpha\in\{8,13,18,23\}$.
Thus, Theorem \ref{TH5.1} follows directly from the following two lemmas.

\begin{lemma}\label{L1}
	\begin{align*}
	& \left( \sum_{n=0}^\infty b_4(25n+8)\,q^n \right) \left( \sum_{n=0}^\infty b_4(25n+23)\,q^n \right) 
	\equiv 0 \pmod {256}.
	\end{align*}		
\end{lemma}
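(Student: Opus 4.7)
The plan is to prove this computationally, using Smoot's \texttt{RaduRK} Mathematica implementation of Radu's Ramanujan--Kolberg algorithm, following exactly the template the paper has set up in the paragraphs preceding this lemma. The generating function for $b_4(n)$ is the eta-quotient corresponding to $M=4$ with exponents $(r_1,r_2,r_4)=(-1,0,1)$ in \eqref{R1}. I would invoke \texttt{RK[N,4,r,25,P]} with $P=\{8,23\}$, treating the two progressions simultaneously as a single ``product'' input. The first task is to select a level $N$ for which the algorithm terminates with usable output; typical candidates are small multiples of $100=25\cdot 4$.

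If the level is chosen well, the algorithm returns an identity of the shape
\[
f(q)\cdot\Bigl(\sum_{n=0}^\infty b_4(25n+8)\,q^n\Bigr)\Bigl(\sum_{n=0}^\infty b_4(25n+23)\,q^n\Bigr) = \sum_{g\in A} g\cdot p_g(t),
\]
where $t, g_1,\ldots,g_u$ are modular functions on $\Gamma_0(N)$, each $p_g(x)\in\mathbb{Z}[x]$, and $f(q)$ is an eta-quotient prefactor. The congruence modulo $256$ then follows by inspection: one checks that every coefficient of every polynomial $p_g$ is a multiple of $256$, and that $f(q)$ is a power series with integer coefficients whose constant term is a unit modulo $256$, so that it can be inverted term-by-term in $(\mathbb{Z}/256\mathbb{Z})[[q]]$ without spoiling divisibility.

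The conceptual reason to expect this to succeed is that the individual series $\sum b_4(25n+8)q^n$ and $\sum b_4(25n+23)q^n$ are each divisible by $16$ (this is in fact the content of Theorem \ref{TH5.1} for these two residues), so their product is automatically divisible by $16\cdot 16=256$. The lemma is thus a packaged form of this joint divisibility, and the advantage of the bundled call with $P=\{8,23\}$ is that a single identity on the product encodes information about both residue classes at once, often producing a sparser and more manageable right-hand side than processing the two cases separately.

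The main obstacle is purely algorithmic: guessing a level $N$ for which \texttt{RaduRK} both terminates in reasonable time and produces an output in which the divisibility by $256$ is explicit. A secondary subtlety is that each polynomial $p_g(t)$ must actually lie in $256\,\mathbb{Z}[t]$; should the output exhibit only divisibility by $128$, one would either raise $N$ (enlarging the space of available modular functions) or combine the output with an auxiliary identity. Given that the same scheme has succeeded for analogous $b_\ell$ congruences in the works of Chen, Xia, Yao, and others cited in Section \ref{RC}, a modest value of $N$ is expected to suffice.
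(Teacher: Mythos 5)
Your proposal follows essentially the same route as the paper, which runs \texttt{RK[20,4,\{-1,0,1\},25,8]} (so the level is $N=20$, not a multiple of $100$), finds that every coefficient of every $p_g(t)$ is divisible by $256$, and concludes exactly as you describe, the eta-quotient prefactor $f(q)$ being invertible over $\mathbb{Z}$. One caution: your heuristic that the product should be divisible by $256$ because each factor is divisible by $16$ inverts the paper's logic --- Theorem \ref{TH5.1} is \emph{deduced} from this lemma (together with Lemma \ref{L2} and the observation that neither series vanishes mod $32$), so it cannot serve as independent motivation without circularity, though this does not affect the validity of your actual computational argument.
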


\begin{proof}
  To establish this congruence identity, we consider the \texttt{RaduRK} program with
  $$\texttt{RK[20,4,\{-1,0,1\},25,8]}.$$

The algorithm returns:
\begin{align*}
P & = \{8,23\} \\
f(q) &= \frac{(q;q)_{\infty }^{52}\, (q^4;q^4)_{\infty }^{12}\, (q^{10};q^{10})_\infty^{32}} 
{q^{31}\, (q^2;q^2)_\infty^{32}\, (q^{5};q^{5})_{\infty}^{10}\, (q^{20};q^{20})_{\infty}^{54}}\\
t &= \frac{ (q^4;q^4)^4_{\infty}\, (q^{10};q^{10})^2_{\infty}}
{q^2\, (q^2;q^2)^2_{\infty}\, (q^{20};q^{20})^4_{\infty}}\\
A &= \left\{1,\frac{(q^4;q^4)_\infty\, (q^{5};q^{5})_\infty^5}{q^3\, (q;q)_\infty\, (q^{20};q^{20})_\infty^5}-\frac{(q^4;q^4)_\infty^{4}\, (q^{10};q^{10})_\infty^2}{q^2\, (q^2;q^2)_\infty^{2}\, (q^{20};q^{20})_\infty^4}\right\}\\
\{p_g(t)\}_{g\in A} &= \left\{-387500000000\, t^2 - 4722300000000\, t^3 + 19755240000000\, t^4 \right.\\
&\qquad -13492968000000\, t^5 - 28902996000000\, t^6 + 51723282400000\, t^7 \\
&\qquad -27746680960000\, t^8 - 697717120000\, t^9 + 7036326368000\, t^{10}\\
&\qquad -2875996422400\, t^{11} + 195120171520\, t^{12} + 113806525952\, t^{13}\\
&\qquad +2380696832\, t^{14} + 2340096\, t^{15},\\
&\qquad  12500000000\, t +262500000000\, t^2 + 1449800000000\, t^3 \\
&\qquad - 9119240000000\, t^4+13179468000000\, t^5 - 2546388000000\, t^6\\ 
&\qquad -9465334400000\, t^7 +8751301760000\, t^8 - 2237298720000\, t^9\\
&\qquad - 628733856000\, t^{10}+318550950400\, t^{11} + 22754516480\, t^{12} \\
&\qquad \left.  + 119739648\, t^{13} + 9472\, t^{14} \right\}.
\end{align*}

Taking into account that $256$ is the common factor of all the coefficients of the polynomials $p_g(t)$, we deduce the  identity
\allowdisplaybreaks{
\begin{align*}
& \frac{(q;q)_{\infty }^{52}\, (q^4;q^4)_{\infty }^{12}\, (q^{10};q^{10})_\infty^{32}} 
{q^{31}\, (q^2;q^2)_\infty^{32}\, (q^{5};q^{5})_{\infty}^{10}\, (q^{20};q^{20})_{\infty}^{54}}  \left( \sum_{n=0}^\infty b_4(25n+8)\,q^n \right) \left( \sum_{n=0}^\infty b_4(25n+23)\,q^n \right)\\
& = 256\, Y_1+256\, Y_2 \left(  \frac{(q^4;q^4)_\infty\, (q^{5};q^{5})_\infty^5} {q^3\, (q;q)_\infty\, (q^{20};q^{20})_\infty^5}-\frac{(q^4;q^4)_\infty^{4}\, (q^{10};q^{10})_\infty^2}{q^2\, (q^2;q^2)_\infty^{2}\, (q^{20};q^{20})_\infty^4} \right),
\end{align*}
where
\begin{align*}
Y_1 &= -1513671875\, t^2 - 18446484375\, t^3 + 77168906250\, t^4  \\
& \quad -52706906250\, t^5 - 112902328125\, t^6 + 202044071875\, t^7\\
& \quad -108385472500\, t^8 - 2725457500\, t^9 + 27485649875\, t^{10}\\
& \quad -11234361025\, t^{11} + 762188170\, t^{12} + 444556742\, t^{13}\\
& \quad +9299597\, t^{14} + 9141\, t^{15},\\
Y_2 &= 48828125\, t +1025390625\, t^2 + 5663281250\, t^3 - 35622031250\, t^4\\
& \quad +51482296875\, t^5 - 9946828125\, t^6 -36973962500\, t^7\\
& \quad +34184772500\, t^8 - 8739448125\, t^9 - 2455991625\, t^{10} \\
& \quad +1244339650\, t^{11} + 88884830\, t^{12} + 467733\, t^{13} + 37\, t^{14}.
\end{align*}
Lemma \ref{L1} follows immediately.
}
\end{proof}

\begin{lemma}\label{L2}
	\begin{align*}
	& \left( \sum_{n=0}^\infty b_4(25n+13)\,q^n \right) \left( \sum_{n=0}^\infty b_4(25n+18)\,q^n \right) 
	\equiv 0 \pmod {256}.
	\end{align*}		
\end{lemma}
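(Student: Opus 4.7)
The plan is to prove Lemma \ref{L2} by exactly the same method used for Lemma \ref{L1}, namely by running Smoot's \texttt{RaduRK} implementation of Radu's Ramanujan--Kolberg algorithm, but with a different choice of the residue class. Since the generating function for $b_4(n)$ corresponds to $M=4$ and the exponent vector $r=(-1,0,1)$ in \eqref{R1}, and since $13 \equiv -12 \pmod{25}$ is paired with $18$ under the algorithm's orbit structure (just as $8$ was paired with $23$ in Lemma \ref{L1}), we expect that the procedure call
$$\texttt{RK[20,4,\{-1,0,1\},25,13]}$$
will return an output with $P=\{13,18\}$, producing an identity of the form
$$f(q)\left(\sum_{n=0}^{\infty} b_4(25n+13)\,q^n\right)\left(\sum_{n=0}^{\infty} b_4(25n+18)\,q^n\right) = \sum_{g\in A} g\cdot p_g(t),$$
where $f(q)$ is an explicit eta-quotient, $t$ is a Hauptmodul-type generator, and $\{p_g(t)\}_{g\in A}$ is a finite family of polynomials whose variable is the modular function $t$ for $\Gamma_0(20)$.

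The first step is to execute the procedure and record the explicit data $(P,f(q),t,A,\{p_g(t)\}_{g\in A})$. The second step is to inspect the integer coefficients of the polynomials $p_g(t)$ for each $g\in A$ and verify that $256$ is a common divisor of all of them. Once this is established, factoring $256$ out of the right-hand side gives
$$f(q)\left(\sum_{n=0}^{\infty} b_4(25n+13)\,q^n\right)\left(\sum_{n=0}^{\infty} b_4(25n+18)\,q^n\right) = 256\cdot\bigl(Y_1 + Y_2 \cdot g_1\bigr),$$
for some auxiliary eta-quotients $Y_1,Y_2$ analogous to those appearing in the proof of Lemma \ref{L1}, where $g_1$ is the nontrivial generator in $A$. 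Since $f(q)$ has integer $q$-series coefficients (apart from a power of $q$ in the denominator, which only shifts indices), and both $Y_1,Y_2,g_1$ are Laurent series in $q$ with integer coefficients, dividing through by $f(q)$ yields a product of two power series with integer coefficients all divisible by $256$.

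The main obstacle is not mathematical but computational: the correctness of the conclusion rests entirely on the \texttt{RaduRK} output being what we expect, in particular that $P=\{13,18\}$ (so the product on the left is indeed the one in the lemma) and that every coefficient of every $p_g(t)$ is divisible by $256$. Both of these are mechanical checks to perform once the algorithm has run, and the correctness proof of the algorithm itself (see \cite{Smoot}) guarantees the validity of the identity. No new ideas beyond those already used in Lemma \ref{L1} are required; the proof will therefore be short, consisting of the statement of the \texttt{RaduRK} output and the observation that $256$ divides the coefficients.
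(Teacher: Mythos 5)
Your proposal matches the paper's proof exactly: the paper runs \texttt{RK[20,4,\{-1,0,1\},25,13]}, obtains $P=\{13,18\}$ with the same $f(q)$, $t$, and $A$ as in Lemma \ref{L1}, and observes that $256$ divides every coefficient of the returned polynomials $p_g(t)$, yielding the factorization $256(Y_1+Y_2\,g_1)$. The approach and all the mechanical checks you describe are precisely those carried out in the paper.
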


\begin{proof}
	
	To establish this congruence identity, we consider the \texttt{RaduRK} program with
	$$\texttt{RK[20,4,\{-1,0,1\},25,13]}.$$

The algorithm returns:
\begin{align*}
P & = \{13,18\} \\
f(q) &= \frac{(q;q)_{\infty }^{52}\, (q^4;q^4)_{\infty }^{12}\, (q^{10};q^{10})_\infty^{32}} 
{q^{31}\, (q^2;q^2)_\infty^{32}\, (q^{5};q^{5})_{\infty}^{10}\, (q^{20};q^{20})_{\infty}^{54}}\\
t &= \frac{ (q^4;q^4)^4_{\infty}\, (q^{10};q^{10})^2_{\infty}}
{q^2\, (q^2;q^2)^2_{\infty}\, (q^{20};q^{20})^4_{\infty}}\\
A &= \left\{1,\frac{(q^4;q^4)_\infty\, (q^{5};q^{5})_\infty^5}{q^3\, (q;q)_\infty\, (q^{20};q^{20})_\infty^5}-\frac{(q^4;q^4)_\infty^{4}\, (q^{10};q^{10})_\infty^2}{q^2\, (q^2;q^2)_\infty^{2}\, (q^{20};q^{20})_\infty^4}\right\}\\
\{p_g(t)\}_{g\in A} &= \left\{62500000000\, t - 650000000000\, t^2 - 4309800000000\, t^3 \right.\\
& \quad 	+19463740000000\, t^4 - 13372968000000\, t^5 - 29049996000000\, t^6 \\
& \quad 	+51949994400000\, t^7 - 27941584960000\, t^8 - 595662560000\, t^9\\
& \quad 	+7002545088000\, t^{10} - 2869711052800\, t^{11} + 194775499520\, t^{12}\\
& \quad 	+113785213952\, t^{13} + 2380097792\, t^{14} + 2273536\, t^{15},\\
& \quad     -50000000000\, t + 587500000000\, t^2 + 712300000000\, t^3\\
& \quad    -8147740000000\, t^4 + 12342468000000\, t^5 - 2047288000000\, t^6\\
& \quad -9669166400000\, t^7 + 8801213760000\, t^8 - 2240988480000\, t^9\\
& \quad  -629908896000\, t^{10} + 318735916800\, t^{11} + 22753946880\, t^{12}\\
& \quad \left.  + 120139008\, t^{13} + 13312\, t^{14} \right\}.
\end{align*}

Taking into account that $256$ is the common factor of all the coefficients of the polynomials $p_g(t)$, we deduce the  identity
			\allowdisplaybreaks{
		\begin{align*}
		& \frac{(q;q)_{\infty}^{52}\, (q^4;q^4)_\infty^{12}\, (q^{10};q^{10})_\infty^{32}} 
		  {q^{31}\, (q^2;q^2)_\infty^{32}\, (q^{5};q^{5})_\infty^{10}\, (q^{20};q^{20})_\infty^{54}} 
		  \left( \sum_{n=0}^\infty b_4(25n+13)\,q^n \right) \left( \sum_{n=0}^\infty b_4(25n+18)\,q^n \right)\\
		& = 256\, Y_1+256\, Y_2 \left(  \frac{(q^4;q^4)_\infty\, (q^{5};q^{5})_\infty^5} {q^3\, (q;q)_\infty\, (q^{20};q^{20})_\infty^5}-\frac{(q^4;q^4)_\infty^{4}\, (q^{10};q^{10})_\infty^2}{q^2\, (q^2;q^2)_\infty^{2}\, (q^{20};q^{20})_\infty^4} \right)  ,
		\end{align*}
		where
		\begin{align*}
		Y_1 &= 244140625\, t -2539062500\, t^2 -16835156250\, t^3  \\
		& \quad +76030234375\, t^4 -52238156250\, t^5 -113476546875\, t^6\\
		& \quad +202929665625\, t^7 -109146816250\, t^8 -2326806875\, t^{9}\\
		& \quad +27353691750\, t^{10} -11209808800\, t^{11} +760841795\, t^{12}\\
		& \quad +444473492\, t^{13} + 9297257\, t^{14}+8881\, t^{15},\\
		Y_2 &= -195312500\, t +2294921875\, t^2 + 2782421875\, t^3 - 31827109375\, t^4\\
		& \quad +48212765625\, t^5 - 7997218750\, t^6 -37770181250\, t^7\\
		& \quad +34379741250\, t^8 - 8753861250\, t^9 - 2460581625\, t^{10} \\
		& \quad +1245062175\, t^{11} + 88882605\, t^{12} + 469293\, t^{13} + 52\, t^{14}.
		\end{align*}
Lemma \ref{L2} follows immediately.
	}
\end{proof}

The \texttt{RaduRK} algorithm can also be used to introduce congruences modulo $64$  for $b_4(n)$.

\begin{theorem}\label{TH5.2}
	Let $\alpha\in\{13,20,27,34,41,48\}$.	For all $n\geqslant 0$, we have
	$$b_4(49n+\alpha) \equiv 0\pmod {64}.$$
\end{theorem}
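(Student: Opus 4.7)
The plan is to mirror the proof of Theorem \ref{TH5.1}, once again invoking Smoot's \texttt{RaduRK} implementation of Radu's Ramanujan--Kolberg algorithm. The six residues $\{13,20,27,34,41,48\}$ satisfy $\alpha+\alpha'=61$ in the three pairs $\{13,48\}$, $\{20,41\}$, $\{27,34\}$, so one expects the algorithm to treat them pairwise in the same manner as for modulus $25$ in Lemmas \ref{L1} and \ref{L2}.

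First I would directly compute $b_4(\alpha)$ for $\alpha\in\{13,20,27,34,41,48\}$ from the generating function $(q^4;q^4)_\infty/(q;q)_\infty$ and verify that each is divisible by $64$ but not by $128$. This establishes
$$
\sum_{n=0}^\infty b_4(49n+\alpha)\,q^n \not\equiv 0 \pmod{128}
$$
for all six residues, the analogue of the non-vanishing condition stated just before Lemmas \ref{L1} and \ref{L2}.

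Next, for each of the three pairs $(\alpha_1,\alpha_2)$ I would invoke the \texttt{RaduRK} program via the call $\texttt{RK[N,4,\{-1,0,1\},49,}\alpha_1\texttt{]}$ at a suitable modular level $N$; a natural first attempt is $N=28$, combining the $4$ appearing in the $b_4$ generating function with the prime $7$ underlying $49$. The expected output is a modular identity of the form
$$
f(q)\cdot \left(\sum_{n=0}^\infty b_4(49n+\alpha_1)\,q^n\right)\!\left(\sum_{n=0}^\infty b_4(49n+\alpha_2)\,q^n\right) = \sum_{g\in A} g\cdot p_g(t),
$$
in which every coefficient of every polynomial $p_g(t)$ is divisible by $4096=64^2$. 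This would yield three lemmas, one per pair, asserting that the corresponding product of generating functions is $\equiv 0 \pmod{4096}$, and Theorem \ref{TH5.2} then follows by exactly the same reasoning used to deduce Theorem \ref{TH5.1} from Lemmas \ref{L1} and \ref{L2}, applied in each pair to extract the individual divisibility by $64$ from the established $2$-adic valuations of the constant terms.

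The main obstacle is computational. With $m=49$ the \texttt{RaduRK} output will be substantially larger than in the $m=25$ case: more and higher-degree polynomials $p_g(t)$ with far larger coefficients, all of whose divisibility by $4096$ must be verified. A closely related difficulty is the choice of level $N$: if $N=28$ proves insufficient, one must enlarge it (to $56$, $84$, or $196$, say) until the algorithm returns an identity of the expected form and pairs the residues in the anticipated way. Should the algorithm instead group the residues into triples or into a single sextuple, the deduction of Theorem \ref{TH5.2} from the resulting identities would need to be adapted to the factorization actually supplied by the algorithm, and this is where the chief conceptual risk lies.
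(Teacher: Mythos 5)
Your proposal matches the paper's method in all essentials: Theorem \ref{TH5.2} is indeed deduced from \texttt{RaduRK} identities at level $N=28$, via the calls \texttt{RK[28,4,\{-1,0,1\},49,13]} and \texttt{RK[28,4,\{-1,0,1\},49,27]}, with the proof otherwise ``quite similar'' to Lemmas \ref{L1} and \ref{L2}. The one detail to correct is the anticipated grouping of the residues: the algorithm does not pair them as $\{13,48\}$, $\{20,41\}$, $\{27,34\}$ (summing to $61$), but returns the two \emph{triples} $\{13,20,34\}$ and $\{27,41,48\}$ --- the two classes are distinguished by whether $(8\alpha+1)/7$ is a square modulo $7$, and each class has $(7-1)/2=3$ elements, whereas modulo $5$ each class has only $2$, which is why the modulus-$25$ case splits into pairs. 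Accordingly, the paper's Lemma \ref{L3} asserts that each triple product of generating functions vanishes modulo $64^3$ rather than each pair product modulo $64^2$, and the extraction of the individual congruences proceeds exactly as you describe: since $\mathbb{F}_2[[q]]$ is an integral domain, the minimal $2$-adic valuations of the factors add, so a sum of three valuations at least $18$ together with an upper bound of $6$ on each (from inspecting small coefficients) forces each to equal $6$. As you explicitly flagged the possibility of triples and indicated how to adapt the deduction, this is a correction of combinatorial detail rather than of method.
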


Theorem \ref{TH5.2} follows directly from the following lemma.

\begin{lemma}\label{L3} The following congruences hold. 
	\begin{enumerate}
		\item [(i)] $\displaystyle{\prod\limits_{\alpha \in \{13,20,34\}} \sum\limits_{n=0}^\infty b_4(49n+\alpha)\,q^n \equiv 0 \pmod {64^3}}$
		\item[(ii)] $\displaystyle{\prod\limits_{\alpha \in \{27,41,48\}} \sum\limits_{n=0}^\infty b_4(49n+\alpha)\,q^n \equiv 0 \pmod {64^3}}$
	\end{enumerate}
\end{lemma}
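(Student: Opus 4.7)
The plan is to mirror the \texttt{RaduRK}-based strategy used for Lemmas \ref{L1} and \ref{L2}, now applied to the modulus $49$ and targeting the sharper divisibility $64^3$ instead of $256$. In those earlier proofs, the inputs $\texttt{RK[20,4,\{-1,0,1\},25,j]}$ returned a set $P$ of residues modulo $25$ lying in a common $\Gamma_0(20)$-orbit, together with an eta-quotient $f(q)$, a Hauptmodul-type function $t$, auxiliary modular functions $g_1,\ldots,g_u$ on $\Gamma_0(20)$, and polynomials $p_g(t)$, such that
\[
f(q) \cdot \prod_{\alpha \in P} \sum_{n=0}^{\infty} b_4(25 n+\alpha)\,q^n \;=\; \sum_{g\in A} g\cdot p_g(t),
\]
after which each coefficient of every $p_g(t)$ turned out to be divisible by $256$. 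I expect the very same phenomenon, at a higher level and with a larger common factor, to establish both parts of Lemma \ref{L3}.

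Concretely, for part (i) I would execute $\texttt{RK[N,4,\{-1,0,1\},49,13]}$, where $N$ is the smallest level satisfying Radu's eligibility criteria \cite{Radu1a,Smoot} compatible with $M=4$ and $m=49$ (a natural first try is $N=28$, and if needed $N=196=4\cdot 49$). The algorithm should return $P = \{13, 20, 34\}$, reflecting the fact that these three residues lie in a common $\Gamma_0(N)$-orbit modulo $49$ and therefore must be treated together; this is exactly why Lemma \ref{L3} groups the six residues from Theorem \ref{TH5.2} into the two triples $\{13,20,34\}$ and $\{27,41,48\}$. Part (ii) is handled identically by running $\texttt{RK[N,4,\{-1,0,1\},49,27]}$, whose expected output has $P = \{27, 41, 48\}$. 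In each case one then inspects the polynomials $p_g(t)$ coefficient by coefficient, verifies that $64^3 = 262144$ is a common divisor, factors it out, and notes (as in Lemmas \ref{L1} and \ref{L2}) that $f(q)$ is, up to a monomial $q^{-N_0}$, a unit power series, so the claimed congruence for the product of the three generating functions follows immediately.

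The principal obstacle is computational rather than conceptual. Multiplying three generating functions instead of two, at modulus $49$ instead of $25$, and targeting $2^{18}$ instead of $2^{8}$, will produce polynomials $p_g(t)$ of substantially higher degree with much larger coefficients than those recorded in Lemmas \ref{L1} and \ref{L2}; the \texttt{4ti2} subroutine solving the associated integer-linear system inside \texttt{RaduRK} will correspondingly do more work, and one must confirm that the chosen $N$ passes the algorithm's eligibility checks. Once \texttt{RaduRK} succeeds, Smoot's correctness proof \cite{Smoot} guarantees that the resulting identity is exact, and the congruence is rigorous; no additional theoretical ingredient beyond what is already invoked in Lemmas \ref{L1} and \ref{L2} is required.
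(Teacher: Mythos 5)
Your proposal matches the paper's own (omitted) proof exactly: the authors state that the lemma follows by running \texttt{RK[28,4,\{-1,0,1\},49,13]} and \texttt{RK[28,4,\{-1,0,1\},49,27]} and proceeding just as in Lemmas \ref{L1} and \ref{L2}, which is precisely your plan (and your first guess $N=28$ is the level they use). The only caveat is that, like the paper, you have not actually exhibited the output polynomials, so the argument rests on the computation completing and the common factor $64^3$ appearing as expected.
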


The proof of this lemma is quite similar to the proof of Lemmas \ref{L1} and \ref{L2}, so we
omit the details. In the algorithm, we use $$\texttt{RK[28,4,\{-1,0,1\},49,13]}$$ and $$\texttt{RK[28,4,\{-1,0,1\},49,27]}.$$

We end this section by noting that it is likely that there exist infinite families of congruences modulo $16$ or modulo $64$ for $b_4(n)$. However, we were unable to find more congruences due to the running time of the \texttt{RaduRK} program.

\section{Open problems and concluding remarks}
\label{S5}

In this paper, inspired by the decompositions of $Q_0(n)$ and $Q_2(n)$
in terms of the partition function $pod(n)$ given by Theorem \ref{T:2}, we  derived  new infinite families of linear inequalities involving  $pod(n)$.
Here we show that there is another way to decompose $Q_0(n)$ using $pod(n)$, and  also establish decompositions of $Q_2(n)$ and $b_4(n)$ in terms of the function $\overline{p}(n)$ which enumerates the overpartitions of $n$ (see \cite{Corteel}).

\begin{theorem}\label{T:6}
	For $n\geqslant 0$, we have
	\begin{align}\label{watsonw0}
	Q_0(n) =  \sum_{k=0}^\infty pod\left(\frac{n-k(k+1)/2}{2} \right).
	\end{align}
\end{theorem}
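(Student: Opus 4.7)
The plan is to prove Theorem \ref{T:6} by a direct generating function computation, the key ingredient being Gauss's triangular number identity together with standard $q$-Pochhammer manipulations of the kind used throughout Section~\ref{S3}.

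First, I would turn the right-hand side of \eqref{watsonw0} into a generating function. Adopting the convention that $pod(x)=0$ when $x$ is not a nonnegative integer, the substitution $m=(n-k(k+1)/2)/2$ converts the sum into a convolution, yielding
\begin{align*}
\sum_{n=0}^\infty q^n \sum_{k=0}^\infty pod\!\left(\frac{n-k(k+1)/2}{2}\right)
&= \left(\sum_{k=0}^\infty q^{k(k+1)/2}\right)\left(\sum_{m=0}^\infty pod(m)\,q^{2m}\right)\\
&= \left(\sum_{k=0}^\infty q^{k(k+1)/2}\right)\cdot\frac{(-q^2;q^4)_\infty}{(q^4;q^4)_\infty},
\end{align*}
where in the last step I replace $q$ by $q^2$ in \eqref{podgen}. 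In view of \eqref{gfQ0}, the theorem therefore reduces to the product identity
\begin{equation*}
(-q,-q^2,-q^3;q^4)_\infty \;=\; \left(\sum_{k=0}^\infty q^{k(k+1)/2}\right)\cdot\frac{(-q^2;q^4)_\infty}{(q^4;q^4)_\infty}.
\end{equation*}

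Next, I would invoke the triangular-number version of Gauss's identity,
$$\sum_{k=0}^\infty q^{k(k+1)/2}=\frac{(q^2;q^2)_\infty}{(q;q^2)_\infty},$$
(the unsigned analogue of \eqref{eq:Gauss21}), to rewrite the right-hand side as
$$\frac{(q^2;q^2)_\infty(-q^2;q^4)_\infty}{(q;q^2)_\infty(q^4;q^4)_\infty}.$$
Splitting $(q^2;q^2)_\infty=(q^2;q^4)_\infty(q^4;q^4)_\infty$, cancelling $(q^4;q^4)_\infty$, and using $(q^2;q^4)_\infty(-q^2;q^4)_\infty=(q^4;q^8)_\infty$ simplifies this to $(q^4;q^8)_\infty/(q;q^2)_\infty$.

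Finally, I would check that this agrees with $(-q,-q^2,-q^3;q^4)_\infty$. Using Euler's identity $(-q;q)_\infty=1/(q;q^2)_\infty$ together with $(-q;q)_\infty=(-q,-q^2,-q^3,-q^4;q^4)_\infty$ and $(-q^4;q^4)_\infty=(q^8;q^8)_\infty/(q^4;q^4)_\infty$, one obtains
$$(-q,-q^2,-q^3;q^4)_\infty=\frac{(q^4;q^4)_\infty}{(q;q^2)_\infty(q^8;q^8)_\infty}=\frac{(q^4;q^8)_\infty}{(q;q^2)_\infty},$$
since $(q^4;q^4)_\infty=(q^4;q^8)_\infty(q^8;q^8)_\infty$. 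This matches the expression obtained above, completing the proof. There is no real obstacle: all steps are routine infinite product manipulations, and the only nontrivial input is Gauss's triangular-number identity. A combinatorial proof — splitting a partition in $\Q_0(n)$ into a staircase of size $k(k+1)/2$ plus a partition of $(n-k(k+1)/2)/2$ in $\p\OD(\,\cdot\,)$ after a suitable Glaisher-type rescaling by $2$ — would be desirable, but is not needed to establish the stated identity.
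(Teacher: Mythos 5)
Your proof is correct and is essentially the same generating-function argument as the paper's analytic proof: both reduce the theorem to recognizing $\sum_{k\geqslant 0}q^{k(k+1)/2}$ as an infinite product and then matching products. The only cosmetic difference is that the paper factors $(-q,-q^2,-q^3;q^4)_\infty$ as $(-q,-q^3,q^4;q^4)_\infty\cdot(-q^2;q^4)_\infty/(q^4;q^4)_\infty$ and evaluates the first factor by the Jacobi triple product \eqref{eq:JTP}, whereas you invoke Gauss's identity $\sum_{k\geqslant 0}q^{k(k+1)/2}=(q^2;q^2)_\infty/(q;q^2)_\infty$ directly and verify the resulting product identity by routine Pochhammer manipulations; these two inputs are interchangeable here.
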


\begin{proof}[Analytic proof]
	By \eqref{gfQ0} we have 
	$$\sum_{n=0}^\infty Q_0(n)\, q^n =(-q,-q^2,-q^3;q^4)_\infty
	 = (-q,-q^3,q^4;q^4)_\infty \cdot \frac{(-q^2;q^4)_\infty}{(q^4;q^4)_\infty}. $$ Next, using \eqref{podgen} and replacing $q$  by $q^4$ and $z$ by $-q$ in \eqref{eq:JTP}, we find
	\begin{align*}
	\sum_{n=0}^\infty Q_0(n)\, q^n 
	& = \left( \sum_{n=-\infty}^\infty q^{2n^2-n}\right) \left( \sum_{n=0}^\infty pod(n)\, q^{2n} \right) \\
	& = \left( \sum_{n=0}^\infty q^{n(n+1)/2}\right) \left( \sum_{n=0}^\infty pod(n)\, q^{2n} \right) \\
	& = \sum_{n=0}^\infty \left( \sum_{k=0}^\infty pod\Big(\frac{n-k(k+1)/2}{2} \Big) \right) q^n.
	\end{align*}
\end{proof}

\begin{proof}[Combinatorial proof]

We begin by recalling a result due to G.~N.~Watson \cite{Watson}.  If we denote by $Q_{odd}(n)$ the number of partitions of $n$ into distinct odd parts, then \begin{equation}\label{watsonqodd}
Q_{odd}(n)=\sum_{k=0}^\infty p\left(\frac{n-k(k+1)/2}{4} \right).
\end{equation}

A combinatorial proof of \eqref{watsonqodd} is given in  \cite{W08} using abacus displays, which were first introduced in \cite{James}. For the convenience of the reader, we briefly describe the construction here, using the language of \cite[section 2]{Schmidt}. 

Given a partition $\lambda$, fill the boxes of its Ferrers diagram with alternating $0$s and $1$s starting with $0$ in the upper left corner. Draw horizontal lines through rows ending in $0$ and vertical lines  through columns ending in $1$. The boxes at the intersection of the lines (pushed toward the northwest) form the  Ferrers diagram of a partition $\alpha$. Repeat the process with horizontal lines through rows ending in $1$ and vertical lines  through columns ending in $0$ to obtain a partition $\beta$. The pair $(\alpha, \beta)$ is referred to as the $2$-quotient of $\lambda$. The $2$-core  of $\lambda$ is the staircase partition $\delta$ whose Ferrers diagram is obtained from the diagram of $\lambda$ by removing $2$-hooks (either two boxes in a row or two boxes in a column) such that after every removal the obtained diagram is the Ferrers diagram of a partition. Then $$|\lambda|=|\delta|+2|\alpha|+2|\beta|.$$  It is shown in \cite[Theorem 2.7.30]{James} that the triple $(\delta, \alpha, \beta)$ completely determines $\lambda$. Clearly, if $\lambda$ is self-conjugate, then $\alpha=\beta$ and $(\delta, \alpha)$ determines $\lambda$. 

Given a partition $\mu\in \Q_{odd}(n)$, consider the partitions $\mu^{sc}$ whose Ferrers diagram is obtained from that of $\mu$ by straightening the hooks nested along the diagonal (this is the classical bijection proving that the number of partitions of $n$ into distinct odd parts equals the number of self-conjugate partitions of $n$). Define $\xi(\mu)$ to be the partition $\alpha$ in the $2$-quotient $(\alpha, \alpha)$ of $\mu^{sc}$.  Then the transformation $$\xi: \mathcal Q_{odd}(n)\to\bigcup_{k=0}^\infty \mathcal P\left(\frac{n-k(k+1)/2}{4} \right)$$ is a bijection. 

To prove \eqref{watsonw0}, start with a partition $\lambda=(\lambda^e, \lambda^o) \in \Q_0(n)$. The parts of $\lambda^e$ are congruent to $2$ modulo $4$. Let $\tilde{\lambda}^e$ be the partitions whose parts are the parts of $\lambda^e$ divided by $2$. Thus, $\tilde \lambda^e$ is a partition with distinct odd parts. We denote by $\tilde \lambda^o$  the partition whose parts are the parts of $\xi(\lambda^o)$ multiplied by $2$. Let $\psi(\lambda)=\tilde \lambda^o \cup \tilde \lambda^e$. Then $$\psi: \Q_0(n) \to  \bigcup_{k=0}^\infty \p\left(\frac{n-k(k+1)/2}{2}\right)$$ is a bijection.





\end{proof}

In a similar way,  $Q_2(n)$ and  $b_4(n)$ can be expressed in terms of $\overline{p}(n)$. 
\begin{theorem}\label{T:7}
	For $n\geqslant 0$, we have
	\begin{align*}
	Q_2(n) =  \sum_{k=0}^\infty \overline{p}\left(\frac{n-k(k+1)/2}{4} \right),
	\end{align*}
	and
	\begin{align*}
	b_4(n) =  \sum_{k=0}^\infty \overline{p}\left(\frac{n-k(k+1)/2}{2} \right).
	\end{align*}
\end{theorem}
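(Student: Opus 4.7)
Both identities are proved by generating-function manipulations that parallel the analytic proof of Theorem \ref{T:6}. In each case the strategy is to isolate a factor that, after multiplying and dividing by an appropriate $(q^k;q^k)_\infty$, becomes the theta series $\sum_{k\geqslant 0} q^{k(k+1)/2}$ via the Jacobi triple product identity \eqref{eq:JTP}, and to recognize the remaining factor as the generating function for $\overline{p}(n)$ evaluated at an appropriate power of $q$.

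For the first identity, I would start from \eqref{gfQ1} in the form $\sum_{n\geqslant 0} Q_2(n)\,q^n = (-q,-q^3;q^4)_\infty \cdot (-q^4;q^4)_\infty$, then multiply and divide the first factor by $(q^4;q^4)_\infty$. By \eqref{eq:JTP} with $q$ replaced by $q^4$ and $z$ by $-q$, the numerator $(-q;q^4)_\infty(-q^3;q^4)_\infty(q^4;q^4)_\infty$ equals $\sum_{n=-\infty}^\infty q^{2n^2-n} = \sum_{k\geqslant 0} q^{k(k+1)/2}$ (reindexing by combining the $n$ and $-n$ contributions). The remaining factor $(-q^4;q^4)_\infty/(q^4;q^4)_\infty$ is precisely $\sum_{m\geqslant 0} \overline{p}(m)\,q^{4m}$. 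Comparing coefficients of $q^n$ on the two sides gives the claim.

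For the second identity, starting from $\sum_{n\geqslant 0} b_4(n)\,q^n = (q^4;q^4)_\infty/(q;q)_\infty$, I would show this equals $\bigl(\sum_{k\geqslant 0} q^{k(k+1)/2}\bigr) \cdot \frac{(-q^2;q^2)_\infty}{(q^2;q^2)_\infty}$, whose second factor is the generating function for $\overline{p}(n)$ in $q^2$. Using the JTP form $\sum_{k\geqslant 0} q^{k(k+1)/2} = (-q;q^2)_\infty(q^4;q^4)_\infty$, then the parity factorization $(-q;q^2)_\infty(-q^2;q^2)_\infty = (-q;q)_\infty$, and finally Euler's identity in the form $(-q;q)_\infty = (q^2;q^2)_\infty/(q;q)_\infty$, the product collapses to $(q^4;q^4)_\infty/(q;q)_\infty$, matching the $b_4$ generating function. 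Comparing coefficients of $q^n$ completes the proof.

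Neither identity presents a substantive obstacle; they are routine $q$-series rearrangements. The main task is choosing the factorizations so that the triangular-number theta series and the $\overline{p}$ generating function appear explicitly on the right-hand side. The only extra work for the $b_4$ case, compared to the proof of Theorem \ref{T:6}, is the parity split of $(q;q)_\infty$: since the denominator $(q;q)_\infty$ mixes all residues mod $2$, one must combine Euler's identity with the even/odd factorization of $(-q;q)_\infty$ to align the product with $(q^4;q^4)_\infty/(q;q)_\infty$.
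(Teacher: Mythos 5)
Your proposal is correct and matches the paper's analytic proof essentially step for step: the same multiplication and division by $(q^4;q^4)_\infty$ and the same application of \eqref{eq:JTP} with $q\mapsto q^4$, $z\mapsto -q$ to produce $\sum_{k\geqslant 0}q^{k(k+1)/2}$, with the $b_4$ case handled by the same Euler/parity rearrangement (you run it from the product back to $(q^4;q^4)_\infty/(q;q)_\infty$ while the paper runs it forward, which is immaterial). The paper additionally supplies a bijective proof via Glaisher's and the Fu--Tang maps, but that is a separate argument beyond what your plan set out to do.
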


\begin{proof}[Analytic proof]
	Arguing as in the proof of Theorem \ref{T:6}, we have
	\begin{align*}
	\sum_{n=0}^\infty Q_2(n)\, q^n 
	& = (-q,-q^3,q^4;q^4)_\infty \cdot \frac{(-q^4;q^4)_\infty}{(q^4;q^4)_\infty}\\
	& = \left( \sum_{n=0}^\infty q^{n(n+1)/2}\right) \left( \sum_{n=0}^\infty \overline{p}(n)\, q^{4n} \right) \\
	& = \sum_{n=0}^\infty \left( \sum_{k=0}^\infty \overline{p}\Big(\frac{n-k(k+1)/2}{4} \Big) \right) q^n
	\end{align*}
	and
	\begin{align*}
	\sum_{n=0}^\infty b_4(n)\, q^n 
	& = \frac{1}{(q;q^2)_\infty (q^2;q^4)_\infty} =  (-q;q)_\infty (-q^2;q^2)_\infty\\
	& = (-q,-q^3;q^4)_\infty \cdot \frac{(-q^2;q^2)_\infty}{(q^2;q^4)_\infty} \\
	& = (-q,-q^3,q^4;q^4)_\infty \cdot \frac{(-q^2;q^2)_\infty}{(q^2;q^2)_\infty}\\
	& = \left( \sum_{n=0}^\infty q^{n(n+1)/2}\right) \left( \sum_{n=0}^\infty \overline{p}(n)\, q^{2n} \right) \\
	& = \sum_{n=0}^\infty \left( \sum_{k=0}^\infty \overline{p}\Big(\frac{n-k(k+1)/2}{2} \Big) \right) q^n.
	\end{align*}
\end{proof}

\begin{proof}[Combinatorial proof]

We start with a partition $\lambda=(\lambda^e,\lambda^o) \in \Q_2(n)$. The parts of $\lambda^e$ are congruent to $0$ modulo $4$. Let $\bar \lambda^e$ be the overpartition whose parts are the parts of $\lambda^e$ divided by $4$ and each part is overlined.  Let $\rho(\lambda)=\xi(\lambda^o) \cup \bar \lambda^e$. Then $$\rho: \Q_2(n) \to  \bigcup_{k=0}^\infty \overline \P\left(\frac{n-k(k+1)/2}{4}\right)$$ is a bijection. 

To prove the second identity we make use of the Fu-Tang combinatorial proof  \cite{Fu-Tang} of another identity due to G.~N.~Watson \cite{Watson}.  If we denote by $Q(n)$ the number of partitions of $n$ into distinct  parts, then the Fu-Tang bijection $$\zeta: \Q(n)\to\bigcup_{k=0}^\infty \P\left(\frac{n-k(k+1)/2}{2} \right)$$ 
(see Section 2 of  \cite{BM} for succinct description of $\zeta$) yields
\begin{equation}\label{watsonq}
Q(n)=\sum_{k=0}^\infty p\left(\frac{n-k(k+1)/2}{2} \right). 
\end{equation}

Start with $\lambda=(\lambda^e, \lambda^o) \in \B_4(n)$. The parts of $\lambda^e$ are congruent to $2$ modulo $4$. We divide each part of $\lambda^e$ by $2$ to obtain a partition $\tilde \lambda^e$ into  odd parts. We use Glaisher's bijection to transform $\tilde \lambda^e$ into a partition with distinct parts whose parts we overline  to obtain an overpartition $\bar \lambda^e$. 
Next, we apply Glaisher's bijection to $\lambda^o$ to obtain a partition $\tilde \lambda^o$ into distinct parts and let $\chi(\lambda)=\zeta(\tilde \lambda^o)\cup \bar \lambda^e$. Then the function  $$\chi : \B_4(n) \to  \bigcup_{k=0}^\infty \overline \P\left(\frac{n-k(k+1)/2}{2}\right)$$ is a bijection. 
\end{proof}

We note that Theorems \ref{T:6} and \ref{T:7} provide new identities of Watson type (more on such  identities  can be found in \cite{BM}).

Numerical evidence suggests that $Q_0(n)$ and $Q_2(n)$ satisfy the following linear homogeneous inequalities analogous to those given by Corollaries \ref{C4.2} and \ref{C4.4}. 


\begin{conjecture}
	For $n,k\geqslant 0$, we have
	\begin{align*}
	& (-1)^{k-1} \left( \sum_{j=0}^{2k-1} (-1)^{j(j+1)/2}\, Q_0\big(n-j(j+1)/2\big)
	- \xi_n \right) \geqslant 0,
	\end{align*}
	with strict inequality if and only if $n\geqslant k(2k+1)$. \end{conjecture}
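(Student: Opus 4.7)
My plan is to mirror the proofs of Corollaries~\ref{C4.2} and~\ref{C4.4}, each of which arises from multiplying a truncated Gauss identity by a suitable infinite product. First I would apply the Andrews--Merca truncated version \eqref{eq1a} of Gauss' identity \eqref{eq:Gauss21} directly (at $q$, rather than at $q^2$ as in the proof of Theorem~\ref{T:5}) and multiply both sides by $\sum_{n=0}^\infty Q_0(n)\,q^n=(-q,-q^2,-q^3;q^4)_\infty$. Invoking the factorization \eqref{eq:GFb}, which rewrites $(-q,-q^2,-q^3;q^4)_\infty\cdot(q^2;q^2)_\infty/(-q;q^2)_\infty$ as $\sum_{n=0}^\infty \xi_n\,q^n$, the result rearranges into the truncated identity
\begin{equation*}
(-1)^{k-1}\!\left(\sum_{j=0}^{2k-1}(-1)^{j(j+1)/2}\,Q_0\big(n-j(j+1)/2\big)-\xi_n\right)=\sum_{m=0}^\infty \xi_m\,MP_k(n-m),
\end{equation*}
the exact analogue of the identities of Theorems~\ref{T:4} and~\ref{T:5}.

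Second, I would read off the strict-versus-non-strict dichotomy directly from this reduction. The closed form for $\sum_{n=0}^\infty MP_k(n)\,q^n$ recalled in the proof of Theorem~\ref{T:5} begins at $q^{k(2k+1)}$ (from the $j=k$ term of the inner sum), with leading coefficient $MP_k(k(2k+1))=1$. Consequently the convolution $\sum_{m=0}^\infty \xi_m\,MP_k(n-m)$ vanishes identically for $n<k(2k+1)$ and equals $1$ at $n=k(2k+1)$; so the ``strict inequality if and only if $n\geqslant k(2k+1)$'' portion of the conjecture would follow immediately from the $\geqslant 0$ claim.

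The main obstacle is the nonnegativity of the signed convolution itself, namely
\begin{equation*}
MP_k(n)+2\sum_{j=1}^\infty(-1)^j\,MP_k(n-4j^2)\geqslant 0
\end{equation*}
for every $n,k\geqslant 0$. In the previous truncated identities (Theorems~\ref{T:4} and~\ref{T:5}) the analogous right-hand sides are convolutions of two genuine partition-counting sequences and so are trivially nonnegative; here $\xi_m$ alternates in sign on the squares $4j^2$, and the product $(q^4;q^4)_\infty(q^4;q^8)_\infty\cdot\sum_{n=0}^\infty MP_k(n)\,q^n$ is not visibly a product of nonnegative series. I would attempt to close this gap by (a)~constructing a sign-reversing involution in the spirit of the maps $\varphi$ and $\varepsilon$ of Section~\ref{S2} that pairs most of the partitions counted with coefficient $-2$ against those counted with $+2$; (b)~finding a refined truncation of the Jacobi triple-product representation $\sum_{n=0}^\infty\xi_n\,q^n=(q^4;q^8)_\infty^2(q^8;q^8)_\infty$ whose remainder, after multiplication by $\sum MP_k(n)\,q^n$, has manifestly nonnegative coefficients; or (c)~inducting on $k$, using the positivity at $n=k(2k+1)$ as a base case together with a telescoping identity between consecutive truncations. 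Approaches (a) and (b) are, I expect, the genuine difficulty and the main reason this inequality appears as a conjecture rather than as a theorem in the paper.
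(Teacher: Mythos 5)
This statement appears in the paper only as a conjecture supported by numerical evidence; the paper offers no proof, so there is no argument of the authors' to compare yours against. That said, your first two steps are correct and genuinely worthwhile. Applying the Andrews--Merca truncation \eqref{eq1a} at $q$ itself, multiplying by $(-q,-q^2,-q^3;q^4)_\infty$ and invoking \eqref{eq:GFb} does produce the exact analogue of Theorems~\ref{T:4} and~\ref{T:5}, namely
\begin{align*}
(-1)^{k-1}\left(\sum_{j=0}^{2k-1}(-1)^{j(j+1)/2}\,Q_0\big(n-j(j+1)/2\big)-\xi_n\right)
&=\sum_{m\geqslant 0}\xi_m\,MP_k(n-m)\\
&=MP_k(n)+2\sum_{j\geqslant 1}(-1)^{j}\,MP_k\big(n-4j^2\big),
\end{align*}
and you correctly read off from the generating function of $MP_k$ that this right-hand side vanishes for $n<k(2k+1)$ and equals $1$ at $n=k(2k+1)$.

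The proposal nevertheless does not prove the conjecture, as you acknowledge: the whole difficulty is concentrated in the nonnegativity of the signed convolution $MP_k(n)+2\sum_{j\geqslant1}(-1)^{j}MP_k(n-4j^2)$, and none of your three suggested routes (a sign-reversing involution, a refined truncation of $\sum_n\xi_n q^n=(q^4;q^8)_\infty^2(q^8;q^8)_\infty$, or induction on $k$) is carried out. Unlike the right-hand sides of Theorems~\ref{T:4} and~\ref{T:5}, which are convolutions of two nonnegative sequences, this one has genuinely cancelling terms, so positivity is a new theta-positivity problem of essentially the same depth as the conjecture itself. There is also a smaller inaccuracy: even granting the $\geqslant 0$ claim, the ``strict inequality if and only if $n\geqslant k(2k+1)$'' clause does not ``follow immediately''---nonnegativity together with the value $1$ at the single point $n=k(2k+1)$ does not give strict positivity for \emph{every} $n\geqslant k(2k+1)$; that requires a separate argument. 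In summary, you have obtained a correct and useful reformulation of the conjecture as a positivity statement for one signed convolution, but the conjecture remains open.
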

	\begin{remark} For example, 
	\begin{align*}
	& Q_0(n) - Q_0(n-1) \geqslant \xi_n,\\
	& Q_0(n) - Q_0(n-1) - Q_0(n-3)+Q_0(n-6) \leqslant \xi_n,\\
	& Q_0(n) - Q_0(n-1) - Q_0(n-3)+Q_0(n-6)+Q_0(n-10)-Q_0(n-15) \geqslant \xi_n.
	\end{align*}\end{remark}

\begin{conjecture}
	For $n,k\geqslant 0$, we have
	\begin{align*}
	& (-1)^{k-1} \left( \sum_{j=0}^{2k-1} (-1)^{j(j+1)/2}\, Q_2\big(n-j(j+1)/2\big)
	- \chi_n \right) \geqslant 0,
	\end{align*}
	with strict inequality if and only if $n\geqslant k(2k+1)$. \end{conjecture}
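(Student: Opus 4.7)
The plan is to follow the same blueprint used for Theorem \ref{T:5}, but applied to the truncated Gauss identity \eqref{eq1a} itself (rather than to the substitution $q\mapsto q^2$). First, in view of the paper's identification $\sum MP_k(n)q^n = \frac{(-q;q^2)_k}{(q^2;q^2)_{k-1}}\sum_{j\geq k}\cdots$, rewrite \eqref{eq1a} as
$$\Bigl(\sum_{n\geq 0} pod(n)\,q^n\Bigr)\sum_{j=0}^{2k-1}(-1)^{j(j+1)/2}q^{j(j+1)/2} \;=\; 1 + (-1)^{k-1}\sum_{n\geq 0} MP_k(n)\,q^n.$$
Then multiply both sides by the theta product $(q^2,q^6,q^8;q^8)_\infty$. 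By \eqref{eq:GFa} this equals $\sum_{n\geq 0}\chi_n q^n$, and since $(-q,-q^3,-q^4;q^4)_\infty = (-q;q^2)_\infty(-q^4;q^4)_\infty$ it equals the ratio $\sum Q_2(n)q^n/\sum pod(n)q^n$. Thus the LHS becomes $(\sum Q_2(n)q^n)\sum_{j=0}^{2k-1}(-1)^{j(j+1)/2}q^{j(j+1)/2}$, and comparing coefficients of $q^n$ after rearranging and multiplying by $(-1)^{k-1}$ reduces the conjecture to
$$\sum_{m\geq 0,\, m(m+1)\leq n}(-1)^{m(m+1)/2}\,MP_k\bigl(n-m(m+1)\bigr) \;\geq\; 0, \qquad \text{strict iff } n\geq k(2k+1).$$

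The main obstacle is that this reduced inequality is genuinely nontrivial. In the analogous step for Theorem \ref{T:5}, the target quantity became a convolution of two series with manifestly non-negative coefficients; here, however, the factor $\sum\chi_n q^n = (q^2,q^6,q^8;q^8)_\infty$ has coefficients of both signs, so the non-negativity above is a partition-theoretic statement about $MP_k$ alone and does not follow formally from the generating-function manipulation.

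To tackle the reduced inequality, I would set up a sign-reversing involution on signed pairs $(\pi,\mu)$ with $|\pi|+|\mu|=n$, where $\pi$ is a partition into distinct parts from $\{2,6,8,10,14,16,\ldots\}$ (i.e.\ parts $\geq 2$ congruent to $0,2,$ or $6$ modulo $8$) weighted by sign $(-1)^{\ell(\pi)}$, and $\mu$ is a partition enumerated by $MP_k$. The designated smallest part of $\mu$ exceeding $2k-1$, which by definition appears exactly $k$ times, provides a robust structural feature one can exchange against the smallest part of $\pi$, either by absorbing it into the even multiplicities of $\mu$ or into its prescribed distinguished block, subject to inequality constraints analogous to those appearing in the combinatorial proof of Theorem \ref{T:2}(ii) in Section \ref{S3}. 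The surviving fixed points should be precisely the pairs with $\pi=\emptyset$, and the strict-inequality threshold $n\geq k(2k+1)$ matches the fact that $(2k+1)^k$ is the \emph{minimal} partition enumerated by $MP_k$: for $n<k(2k+1)$ every $MP_k$-partition in the sum has size $\geq 2k+1$, so every contributing $\pi$ has a chance to pair off, and the sum vanishes identically; for $n\geq k(2k+1)$, the pair $(\emptyset,(2k+1)^k\cup\nu)$ for certain $\nu$ survives.

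Pinning down such an involution in clean combinatorial form (or an equivalent bijective interpretation of the alternating $MP_k$-sum as counting a family of decorated partitions) is the principal technical difficulty, and is presumably why the authors left the statement as a conjecture rather than a theorem.
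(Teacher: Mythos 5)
First, a point of reference: the paper offers no proof of this statement --- it is stated as a conjecture supported only by numerical evidence --- so there is no argument of the authors to compare yours against. That said, your generating-function reduction is correct and is the natural analogue of the proof of Theorem \ref{T:5}: multiplying the truncated identity \eqref{eq1a}, written as $\big(\sum_n pod(n)q^n\big)\sum_{j=0}^{2k-1}(-1)^{j(j+1)/2}q^{j(j+1)/2}=1+(-1)^{k-1}\sum_n MP_k(n)q^n$, by $(q^2,q^6,q^8;q^8)_\infty=\sum_n\chi_n q^n$ and using \eqref{eq:GFa} does show that the conjectured quantity equals $\sum_{m\geqslant 0}(-1)^{m(m+1)/2}MP_k\big(n-m(m+1)\big)$. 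You are also right to flag that this is where the real difficulty sits, since $\sum_n\chi_nq^n$ has coefficients of both signs and the nonnegativity does not follow formally.

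The gap is that your sketch of how to finish cannot work as described. If a sign-reversing involution on your pairs $(\pi,\mu)$ had fixed points exactly the pairs with $\pi=\emptyset$, the alternating sum would equal $MP_k(n)$; this is false. For $k=1$, $n=5$ one has $MP_1(5)=1$ and $MP_1(3)=1$ (the only qualifying partitions are $(5)$ and $(3)$), so $\sum_m(-1)^{m(m+1)/2}MP_1\big(5-m(m+1)\big)=MP_1(5)-MP_1(3)=0\neq MP_1(5)$. Equivalently, $Q_2(5)-Q_2(4)-\chi_5=2-2-0=0$, so the quantity can vanish even when $n\geqslant k(2k+1)$ --- which also shows that your heuristic for the strictness threshold (``the pair $(\emptyset,(2k+1)^k\cup\nu)$ survives'') is not sound, and indeed suggests the ``if and only if'' clause of the conjecture itself needs scrutiny. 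Beyond that, the exchange map is never actually defined: ``absorbing the smallest part of $\pi$ into the even multiplicities of $\mu$ or into its distinguished block, subject to inequality constraints'' is a hope, not a construction, and the example above shows any correct involution must have a more delicate fixed-point set than $\pi=\emptyset$. In short, the reduction to an $MP_k$-positivity statement is a legitimate and correctly executed first step, but the conjecture remains unproven.
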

	\begin{remark} For example, 
	\begin{align*}
	& Q_2(n) - Q_2(n-1) \geqslant \chi_n,\\
	& Q_2(n) - Q_2(n-1) - Q_2(n-3)+Q_2(n-6) \leqslant \chi_n,\\
	& Q_2(n) - Q_2(n-1) - Q_2(n-3)+Q_2(n-6)+Q_2(n-10)-Q_2(n-15) \geqslant \chi_n.
	\end{align*}\end{remark}

\section*{Acknowledgements} The authors thank an anonymous referee for many useful suggestions that helped improve the presentation of the article.

\bigskip


\end{document}